\definecolor{dkgreen}{rgb}{0,0.6,0}
\numberwithin{equation}{section}
\renewcommand{\paragraph}{\roman{paragraph}}
\tikzstyle arrowstyle=[scale=1]
\tikzstyle directed=[postaction={decorate,decoration={markings, mark=at position .65 with {\arrow[arrowstyle]{stealth}}}}]
\tikzstyle reverse directed=[postaction={decorate,decoration={markings, mark=at position .65 with {\arrowreversed[arrowstyle]{stealth};}}}]
\newtheorem{claim}{Claim}[section]
\newtheorem{theorem}{Theorem}[section]
\newtheorem{corollary}[theorem]{Corollary}
\newtheorem{definition}[theorem]{Definition}
\newtheorem{lemma}[theorem]{Lemma}
\begin{document}

\title{Maximum number of spanning trees and connectivity:\\ Graphs with a fixed minimum degree and bipartite graphs\thanks{ $\dag$ Corresponding author. E-mail addresses: xush0928@163.com (S.\ Xu), kexxu1221@126.com
(K.\ Xu), ivan.damnjanovic@famnit.upr.si (I.\  Damnjanovi\'{c}).}}\author{{Shaohan Xu $^{a}$, Kexiang Xu $^{a,\dag}$ and Ivan Damnjanovi\'{c} $^{b,c}$}\\\\{\small $^{a}$ School of Mathematics, Nanjing University of Aeronautics and Astronautics,}\\
{\small Nanjing, Jiangsu 210016, PR China}\\
{\small $^{b}$ FAMNIT, University of Primorska, Koper, Slovenia}\\
{\small $^{c}$ Faculty of Electronic Engineering, University of Ni\v{s}, Ni\v{s}, Serbia}}

\maketitle
\begin{abstract}
The number of spanning trees in a graph $G$ is the total number of distinct spanning subgraphs of $G$ that are trees. In this paper we characterize the unique graph with a prescribed vertex (resp.\ edge) connectivity, minimum degree and order that attains the maximum number of spanning trees. Moreover, all the bipartite graphs are determined with a given vertex (resp.\ edge) connectivity and order maximizing the number of spanning trees.\\
\noindent{\bf Keywords:} spanning tree, vertex connectivity, edge connectivity, minimum degree, bipartite graph.\\
\noindent{{\bf  2020 Mathematics Subject Classification:}  05C05, 05C30, 05C35.}
\end{abstract}

\section{Introduction}
All graphs considered in this paper are finite, undirected and without loops. Let $G$ be a graph with vertex set $V(G)$ and edge set $E(G)$. For any vertex $v\in V(G)$, we denote by $N_{G}(v)$, or simply $N(v)$, the set of vertices adjacent to $v$ in $G$, where $d_{G}(v)=|N_{G}(v)|$ is the degree of $v$ in $G$. Let $N_{G}[v]=N_G(v)\cup \{v\}$ for $v\in V(G)$. Without causing any confusion, let $N_{S}(v)=N_{G}(v)\cap S$ and $d_{S}(v)=|N_{S}(v)|$ for any $v\in V(G)$ and  $S\subseteq V(G)$.  Let $\delta(G)$, or $\delta$ for short, be the {\it minimum degree} of vertices in $G$. For any $S\subseteq V(G)$, $G[S]$ is the subgraph of $G$ induced by $S$. For any $V'\subseteq V(G)$ and $E'\subseteq E(G)$, let $G-V'$ (resp.\ $G-E'$) be the subgraph of $G$ obtained by deleting all vertices in $V'$ and all incident edges (resp.\ all edges in $E'$) with them. The graph $G$ is {\it connected} if each pair of vertices is joined by a path in it. A subset $S\subseteq V(G)$ is called a {\it vertex cut}  of a connected graph $G$ if $G-S$ is disconnected or a single vertex. Similarly, a subset $U\subseteq E(G)$ is called an {\it edge cut} in a connected graph $G$ if $G-U$ is disconnected. The {\it vertex connectivity} $\kappa(G)$ of $G$ is the minimum cardinality of  all vertex cuts of $G$. The  {\it edge connectivity} $\kappa' (G)$ of $G$ can be defined analogously. For any positive integer $k$, we write the set $\{1, 2, \ldots, k\}$ as $[k]$.

The number of spanning trees of $G$, denoted by $\tau(G)$,  is an important, well-studied invariant in graph theory and has found applications in many other scientific areas. For instance, the spanning tree number $\tau(G)$ is a key parameter in Tutte polynomials  and it is exactly the order of the critical group \cite{C5,D3} (also known as the sandpile group in statistical physics or the Picard group in algebraic geometry) of the graph.  Moreover, the spanning tree number has a close relation to the partition function of the $q$-state Potts model in statistical mechanics \cite{W2}. The problem of counting spanning trees  is  a fundamental  and popular topic in mathematics, physics, computer science, and so on, and has been extensively studied  with some relevant results in
 \cite{C3,D1,R2,G1,G2,T3,Z1}.

Another interesting problem is to determine the maximum or minimum number of spanning trees from some special classes of graphs.  This problem  has some  applications in the areas of experimental design \cite{C1} and  network theory \cite{K1}. From the viewpoint of network theory, the graphs with a large number of spanning trees will have greater reliability \cite{C2}. Thus it is significant to determine the value (or the bound) of the number of spanning trees in the design of reliable probabilistic networks. On minimizing the number of spanning trees in graphs, Alon \cite{A1}, Kostochka \cite{K4} and Sereni and Yilma \cite{S3} studied  the asymptotics of the minimum number of spanning trees in  regular graphs; Bogdanowicz \cite{B6,B8} characterized  the graphs minimizing the number of spanning trees among all connected graphs with $n$ vertices and $m$ edges; Bogdanowicz \cite{B7,B9} considered the minimum number of spanning trees in chordal graphs with a fixed vertex connectivity; Gong {\it et al}. \cite{G8} obtained the graphs minimizing the number of spanning trees in bipartite graphs with a fixed cyclomatic number.  On maximizing the number of spanning trees in graphs, McKay \cite{M1} precisely determined the asymptotics of the maximum number of spanning trees in  $d$-regular  graphs; Kelmans \cite{K1}, Grimmett \cite{G4}, Grone and Merris \cite{G5} and  Das \cite{D2} established several upper bounds on $\tau(G)$ for various graphs with given parameters; Li {\it et al}. \cite{L1} found the graphs maximizing the number of spanning trees among the graphs with a given connectivity and chromatic number, respectively;  Tapp \cite{T2} provided  the upper bound on the number of spanning trees for grid graphs. Moreover,  some relevant results are in \cite{O1,P3} for multi-graphs.

Denote by $K_{n}$ the {\it complete graph} of order $n$. Let $\overline{G}$ be the {\it complement} of a graph $G$.  For two vertex-disjoint graphs $G$ and $H$, their {\it union} is denoted by $G \cup H$ and their {\it join} $G\vee H$ is  the graph  obtained from $G\cup H$ by adding all possible edges from $V(G)$ to $V(H)$.   For any $r, \delta, n \in \mathbb{N}$  with $\delta \ge r$ and $n \ge \delta + 1$, let $\mathbb{V}_{n,\delta}^{r}$ (resp.\ $\mathbb{E}_{n,\delta}^{r}$) be the set of graphs of order $n$ with vertex (resp.\ edge) connectivity $r$ and minimum degree $\delta$, and  $\mathbb{BV}_n^r$ (resp.\ $\mathbb{BE}_n^r$) be the set of bipartite graphs of order $n$ with vertex (resp.\ edge) connectivity $r$.

In this paper we firstly prove that in $\mathbb{V}_{n,\delta}^{r}$ the maximum number of spanning trees is obtained uniquely at $K_{r}\vee(K_{\delta-r+1}\cup K_{n-\delta-1})$. Furthermore, we determine that $M_{n-\delta-1,\delta+1}^{r}$ uniquely attains the maximum number of spanning trees in $\mathbb{E}_{n,\delta}^{r}$, provided $\delta > r$, where $M_{n-\delta-1,\delta+1}^{r}$ is the graph obtained by connecting $r$ disjoint edges between two vertex-disjoint complete graphs $K_{n-\delta-1}$ and $K_{\delta+1}$. Moreover, we determine  all the graphs  that maximize the number of spanning trees in  $\mathbb{BV}_n^r$ and  $\mathbb{BE}_n^r$, respectively.

\section{Preliminaries}
For any two distinct vertices $v, w \in V(G)$ in $G$, the graph $shift(G,v,w)$ is obtained from $G$ by deleting $vx$ and adding $wx$ for all $x\in N(v)\setminus N[w]$. The above
transformation is called the {\it shift transformation} \cite{B2,S1}.  In particular, we define the {\it partial shift transformation} as follows. The graph $G(v\xrightarrow{x~} w)$ is  obtained from $G$ by deleting the edge $vx$ and adding the edge $wx$ for a vertex $x\in N(v)\setminus N[w]$.  The following result presents the effect of the partial shift transformation of graphs on the number of spanning trees.

\begin{lemma}\label{xx1}
Let $G$ be a connected graph with  two distinct vertices  $v,w\in V(G)$ such that  $N_{G}(w)\subseteq N_{G}[v]$. Suppose that two distinct vertices $v_{1}, v_{2}\in N_{G}(v)\setminus N_{G}[w]$ are joined by a path $P^*$ such that  $V(P^*)\cap(N_{G}[w]\cup \{v\})=\emptyset$. Then $\tau(G(v\xrightarrow{v_{1}}w))> \tau (G).$
\end{lemma}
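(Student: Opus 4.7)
The plan is to show $\tau(G') > \tau(G)$ by an injection-plus-witness argument. Writing $G' = G - vv_1 + wv_1$, every spanning tree of $G$ not containing $vv_1$ is a spanning tree of $G - vv_1 = G' - wv_1$, which is exactly a spanning tree of $G'$ not containing $wv_1$. So letting $\mathcal{B} := \{T \in \mathcal{T}(G) : vv_1 \in T\}$ and $\mathcal{B}' := \{T' \in \mathcal{T}(G') : wv_1 \in T'\}$, where $\mathcal{T}$ denotes the set of spanning trees, we have $\tau(G') - \tau(G) = |\mathcal{B}'| - |\mathcal{B}|$, and it suffices to produce an injection $f : \mathcal{B} \hookrightarrow \mathcal{B}'$ together with one element of $\mathcal{B}'$ lying outside its image.

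I would define $f$ by a case analysis on the position of $w$ in $T - vv_1$, whose two components are $C_v$ (containing $v$) and $C_{v_1}$ (containing $v_1$). If $w \in C_v$, set $f(T) := T - vv_1 + wv_1$, which is a spanning tree of $G'$ since $wv_1$ reconnects the two components. If instead $w \in C_{v_1}$, the $T$-path from $v_1$ to $w$ has length at least $2$ (because $v_1 w \notin E(G)$); let $y$ be its penultimate vertex. Then $yw \in T$ and $y \in N_G(w)$; since $v \notin C_{v_1}$ while $y \in C_{v_1}$, we get $y \neq v$, and the hypothesis $N_G(w) \subseteq N_G[v]$ forces $y \in N_G(v)$, so that $vy \in E(G) \setminus \{vv_1\} \subseteq E(G')$. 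Set $f(T) := T - \{vv_1, yw\} + \{wv_1, vy\}$; tracking components after the two deletions (which split $T$ into three pieces $C_v$, $C_1 \ni v_1, y$, $C_w \ni w$) and the two additions shows this is a spanning tree of $G'$ containing $wv_1$.

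For injectivity, I observe that the two cases produce trees distinguishable by whether $v$ and $w$ lie in the same component of $T' - wv_1$: in Case 1 they do, and in Case 2 they do not. The original tree $T$ is recovered uniquely---as $T' - wv_1 + vv_1$ in Case 1, and as $T' - \{wv_1, vy\} + \{vv_1, yw\}$ in Case 2, where $y$ is identified as the unique neighbor of $v$ on the $T'$-path from $v$ to $v_1$.

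Finally, I would construct the witness $T^* \in \mathcal{B}' \setminus f(\mathcal{B})$ using $P^*$. The concatenation $\Pi : v, v_2, p_\ell, \ldots, p_1, v_1, w$, consisting of $vv_2$, the edges of $P^*$ traversed from $v_2$ back to $v_1$, and $wv_1$, has all vertices distinct because $V(P^*) \cap (N_G[w] \cup \{v\}) = \emptyset$, and every one of its edges lies in $E(G')$. The sub-path $\Pi - wv_1$ also shows that $vv_1$ is not a bridge of $G$, so $G'$ is connected and $\Pi$ extends to some spanning tree $T^*$ of $G'$. Then $T^* \in \mathcal{B}'$; in $T^* - wv_1$ the vertex $w$ is separated from $v$, ruling out Case 1; and the unique $T^*$-path from $v$ to $v_1$ begins $v, v_2, \ldots$ with $v_2 \notin N_G(w)$, ruling out Case 2. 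Hence $T^* \notin f(\mathcal{B})$ and $|\mathcal{B}'| > |\mathcal{B}|$. The main obstacle is the careful treatment of Case 2: one must combine two deletions and two additions to get a genuine spanning tree of $G'$, and the construction works only because the hypothesis $N_G(w) \subseteq N_G[v]$ guarantees that the neighbor $y$ of $w$ used in the swap is automatically also a neighbor of $v$.
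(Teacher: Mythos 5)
Your proposal is correct and follows essentially the same approach as the paper: the injection is the swap $vv_1 \to wv_1$, repaired when it disconnects the tree by a second swap $wy \to vy$ using the hypothesis $N_G(w)\subseteq N_G[v]$, and the witness tree outside the image is built from the path through $P^*$. Your write-up is somewhat more careful than the paper's (explicitly reducing to the trees containing $vv_1$, verifying connectivity of $G'$, and checking injectivity via the component criterion for $T'-wv_1$), but the underlying argument is the same.
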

\begin{proof}
Let $N_{G}(w)\setminus \{v\}=\{w_{1},w_{2}, \ldots,w_{s}\}$ and $G'=G(v\xrightarrow{v_{1}}w)$. Let $\mathcal{T}_G$ and $\mathcal{T}_{G'}$ be the sets of spanning trees in $G$ and $G'$, respectively.  We will show that there is an injection from $\mathcal{T}_G$ to $\mathcal{T}_{G'}$. It suffices to prove that there is a corresponding  $T_{G'}\in\mathcal{T}_{G'}$  for any $T_{G}\in \mathcal{T}_G$.

If $T_{G}$ does not contain the edge $vv_1$, then the above result holds trivially with $T_{G'}=T_{G}$. If  $vv_1\in E(T_{G})$, we assume that $F=T_{G}(v\xrightarrow{v_{1}} w)$. If $F$ is a tree, then  $F\in\mathcal{T}_{G'}$. Then we take $T_{G'}=F$, which also means that  $T_G=T_{G'}(w\xrightarrow{v_{1}} v)$. If $F$ is not a tree,  from the construction of $F=T_{G}(v\xrightarrow{v_{1}} w)$, $F$ is a  disconnected subgraph with exactly one cycle. Then there exists some vertex $w_{i}\in N_{G}(w)\setminus \{v\}$ with $1\leq i\leq s$ contained in one cycle $C_{1}=v_{1}ww_{i}\cdots v_{1}$ of $F$. Note that $w_{i}\in N_{G}(v)$ and the edge $vw_{i}$ cannot be included in $F$  since vertices $v,w$ must be in two  distinct  components of $F$. Thus, we replace $ww_{i}$ with $vw_{i}$ in $F$ to get $T_{G'}$. Furthermore, $T_{G}$  can be also obtained from $T_{G'}$ by running the transformation $T_{G'}(w\xrightarrow{v_{1}} v)$ to make a disconnected graph $W$ with exactly one cycle $C_{2}=v_{1}vw_{i}\cdots v_{1}$ for some $w_{i}$ with $1\leq i\leq s$, then replacing $vw_{i}$ by $ww_{i}$ in $W$ (since  $ww_{i}\not\in E(W)$).  Combining the above two cases,  we can get a $T_{G'}\in \mathcal{T}_{G'}$  corresponding to each $T_G\in\mathcal{T}_G$  for $G'=G(v\xrightarrow{v_{1}}w)$.

Since $P^*$ contains two terminal vertices $v_1$ and $v_2$  in $G'$,  there is a path $P=wP^*v$ of $G'$ which contains no vertices in $N_{G}(w)$. Assume  that $T_{G'}$  contains $P$. Then $T_{G'}(w\xrightarrow{v_{1}}u)$ produces a disconnected graph with exactly one cycle $C_{3}=vv_{1}\cdots v_{2}v$  such that  $w_{i}\notin V(C_3)$ for any $1\leq i\leq s$. Then $C_{3}\neq C_{2}$, which implies that $T_{G'}$ does not have a corresponding $T_G$ by the previously defined rules. Hence, $|\mathcal{T}_{G'}|>|\mathcal{T}_{G}|$, completing the proof.
{\hfill}
\end{proof}

\begin{lemma}\label{x3}
Let $G$ be a non-complete connected graph with $e\in E(\overline{G})$. Then we have $\tau(G) < \tau(G+e)$, where $G+e$ denotes the graph obtained from $G$ by adding the edge $e$.
\end{lemma}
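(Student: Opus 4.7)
The plan is to apply the classical deletion--contraction identity to the new edge $e$ in $G+e$. Writing $H = G+e$ and letting $H/e$ denote the multigraph obtained from $H$ by contracting $e$ (with any resulting loops discarded, since loops contribute nothing to spanning trees), one has the standard formula $\tau(H) = \tau(H-e) + \tau(H/e)$. Because $H-e = G$, this specialises to
\[
\tau(G+e) \;=\; \tau(G) \;+\; \tau\bigl((G+e)/e\bigr).
\]
Since $G$ is connected, so is $G+e$, and hence also its contraction $(G+e)/e$. A connected graph always admits at least one spanning tree, so $\tau((G+e)/e) \ge 1$, and the strict inequality $\tau(G+e) > \tau(G)$ follows immediately.

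If one prefers to avoid invoking the deletion--contraction identity, a direct injection argument of equal length works. Every spanning tree of $G$ is automatically a spanning tree of $G+e$, so $\tau(G) \le \tau(G+e)$. To upgrade to a strict inequality, fix any spanning tree $T$ of $G$ (which exists since $G$ is connected), and observe that $T+e$ contains a unique cycle $C$, which has length at least $3$ because $e$ joins two distinct vertices. Deleting any edge of $C$ other than $e$ produces a spanning tree of $G+e$ that contains $e$; this tree is not a spanning tree of $G$, providing the extra element.

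There is essentially no obstacle in either route. The only checks needed are that $(G+e)/e$ is connected (immediate from the connectivity of $G$) and that the fundamental cycle $C$ of $e$ with respect to $T$ has at least two edges (immediate since the endpoints of $e$ are distinct). I would opt for the deletion--contraction version, both for its brevity and because it yields the quantitative refinement $\tau(G+e) - \tau(G) = \tau((G+e)/e)$, which can be useful elsewhere in the paper when comparing spanning tree counts of graphs differing by a single edge.
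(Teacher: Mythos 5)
Your proof is correct. The paper itself states Lemma~\ref{x3} without proof, treating it as a well-known fact, so there is no in-paper argument to compare against; either of your two routes would serve as a complete justification. The deletion--contraction identity $\tau(G+e)=\tau(G)+\tau((G+e)/e)$ applies since $e$ joins two distinct vertices (hence is not a loop), and the contraction of the connected graph $G+e$ is connected, giving $\tau((G+e)/e)\ge 1$; your fallback injection argument, exhibiting a spanning tree of $G+e$ that uses $e$, is equally sound.
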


A weighted graph is a graph $G$ together with a weight function $\omega:E(G)\rightarrow\mathbb{R}$, which is equivalent to an electrical network in which each edge $e$ has a resistor with conductance $\omega(e)$. The word ``weighted" is omitted when $\omega(e)=1$ for each $e\in E(G)$. For a weighted graph $G$, we also use  $\tau(G)$ to denote  the sum of weights of spanning trees of $G$, where the weight of a spanning tree $T$ in $G$ is the product of weights of edges in $T$. That is,
\begin{displaymath}
\tau(G)=\sum_{T\in \mathcal{T}_G}\prod_{e\in E(T)}\omega(e),
\end{displaymath}
where $\mathcal{T}_G$ is the set of spanning trees of $G$. If $\omega(e)=1$ for each edge $e\in E(G)$, then $\tau(G)=|\mathcal{T}_G|$.
We list the effect of some well-known transformations in electrical networks on the number of spanning trees as follows.  We refer the readers to \cite{T1} for details.

\vspace*{0.20cm}
\indent{\bf Parallel edges merging.}  If $G'$ is obtained from a graph $G$ by merging two parallel edges with weights $a$ and $b$ in $G$  into a single edge with weight $a + b$, then $\tau(G)=\tau(G')$.

\vspace*{0.20cm}
\indent{\bf Serial edges merging.}  If $G'$ is obtained from a graph $G$ by merging two serial edges with weights $a$ and $b$ in $G$  into a single edge with weight $\frac{ab}{a+b}$, then $\tau(G)=(a+b)\tau(G')$.

\vspace*{0.20cm}
\indent{\bf Mesh-star transformation.}  If $G'$ is obtained from a graph $G$ by replacing a complete subgraph $K_{s}$ of $G$ with weight $1$ on every edge by a weight star subgraph $K_{1,s}^{\omega}$ with weight $s$  on every edge, then $\tau(G)=\frac{1}{s^{2}}\tau(G')$.

\vspace*{0.20cm}
Let $G$ be a graph with vertex set $V(G)=\{v_{1},v_{2}, \ldots, v_{n}\}$. The {\it generalized join graph} $G[H_{1},H_{2}, \ldots, H_{n}]$ is obtained from $G$ by replacing each vertex $v_{i}$ with a graph $H_{i}$ and joining each vertex in $H_{i}$ with each vertex in $H_{j}$ provided $v_{i}v_{j}\in E(G)$. For a graph $H$ on $n$ vertices, let $\mu_1(H) \geq \cdots \geq \mu_{n-1}(H) \geq \mu_{n}(H)=0$ be the
eigenvalues of the Laplacian matrix $L_{H}$ of $H$.  Zhou and Bu  \cite{Z1} gave a formula for counting spanning trees in $G[H_{1},H_{2}, \ldots, H_{n}]$.

\begin{theorem}[\hspace{1sp}{\cite{Z1}}]\label{x111}
Let $G_{0}=G[H_{1},H_{2}, \ldots, H_{n}]$  be a generalized join graph of a connected graph $G$ with $n_{i}=|V(H_{i})|$ for $1\leq i\leq n$. Then
\begin{displaymath}
\tau(G_0)=\prod_{v_{i}\in V(G)}\frac{\prod_{k=1}^{n_{i}-1}\left(\mu_{k}(H_{i})+\sum_{v_{j}\in N_{G}(v_{i})}n_{j}\right)}{n_{i}}\sum_{T\in \mathcal{T}_G}\prod_{v_{i}v_{j}\in E(T)}n_{i}n_{j}.
\end{displaymath}
\end{theorem}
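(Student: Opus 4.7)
The plan is to apply Kirchhoff's matrix-tree theorem, $\tau(G_0) = \frac{1}{|V(G_0)|}\det'(L_{G_0})$ (where $\det'$ denotes the product of nonzero eigenvalues and $|V(G_0)| = \sum_i n_i$), and to compute the spectrum of $L_{G_0}$ by exploiting its block structure. Indexing rows and columns by $V(H_1) \sqcup \cdots \sqcup V(H_n)$, the diagonal blocks of $L_{G_0}$ are $L_{H_i} + d_i I_{n_i}$ with $d_i := \sum_{v_j \in N_G(v_i)} n_j$, while the off-diagonal block between $V(H_i)$ and $V(H_j)$ is $-J_{n_i \times n_j}$ when $v_i v_j \in E(G)$ and zero otherwise. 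Writing $\mathbb{R}^{V(G_0)} = V_\parallel \oplus V_\perp$ with $V_\parallel := \mathrm{span}\{\mathbf{1}_{V(H_i)} : 1 \le i \le n\}$, one uses $L_{H_i}\mathbf{1} = 0$, $J_{n_i \times n_j}\mathbf{1}_{n_j} = n_j \mathbf{1}_{n_i}$, and $J_{n_i \times n_j} x = 0$ for $x \perp \mathbf{1}_{n_j}$ to check that both subspaces are $L_{G_0}$-invariant.

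On $V_\perp$ the cross blocks annihilate, so $L_{G_0}|_{V_\perp}$ is block-diagonal with blocks $(L_{H_i} + d_i I)|_{\mathbf{1}_{n_i}^\perp}$, contributing exactly the eigenvalues $\mu_k(H_i) + d_i$ for $1 \le k \le n_i - 1$ and each $i$. On $V_\parallel$, in the orthonormal basis $\{\mathbf{1}_{V(H_i)}/\sqrt{n_i}\}_i$ one reads off a symmetric $n \times n$ matrix $\tilde M$ with $\tilde M_{ii} = d_i$ and $\tilde M_{ij} = -\sqrt{n_i n_j}$ for $v_i v_j \in E(G)$. A direct computation shows $\tilde M = D^{-1/2} L_{G,w} D^{-1/2}$, where $D = \mathrm{diag}(n_1,\ldots,n_n)$ and $L_{G,w}$ is the weighted Laplacian of $G$ with edge weights $w(v_i v_j) = n_i n_j$.

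The main obstacle is evaluating $\det'(\tilde M)$: since $\tilde M$ and $L_{G,w}$ have different null vectors (namely $(\sqrt{n_i})_i$ versus $\mathbf{1}$), the pseudo-determinants cannot be related by a one-line similarity. My approach is the identity $\det'(L) = \sum_{i=1}^n \det(L[\bar i\,|\,\bar i])$ (read off the coefficient of $t$ in $\det(tI - L)$), combined with the observation that $\tilde M[\bar i\,|\,\bar i] = D[\bar i]^{-1/2} L_{G,w}[\bar i\,|\,\bar i] D[\bar i]^{-1/2}$, where $D[\bar i]$ is $D$ with the $i$-th entry deleted. Hence $\det(\tilde M[\bar i\,|\,\bar i]) = \det(L_{G,w}[\bar i\,|\,\bar i]) / \prod_{j \ne i} n_j$, which equals $\tau(G,w)\, n_i / \prod_j n_j$ by the weighted matrix-tree theorem applied to $G$. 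Summing over $i$ yields $\det'(\tilde M) = \frac{\sum_i n_i}{\prod_i n_i}\,\tau(G,w)$.

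Plugging the two spectral contributions into Kirchhoff's formula, the factor $\sum_i n_i$ cancels, leaving $\tau(G_0) = \prod_i \frac{\prod_{k=1}^{n_i-1}(\mu_k(H_i)+d_i)}{n_i} \cdot \tau(G, w)$. Unpacking $\tau(G, w) = \sum_{T \in \mathcal{T}_G} \prod_{v_i v_j \in E(T)} n_i n_j$ and $d_i = \sum_{v_j \in N_G(v_i)} n_j$ recovers the claimed identity exactly.
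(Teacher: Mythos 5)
Your proof is correct, but note that the paper itself offers no proof of this statement: Theorem \ref{x111} is quoted from Zhou and Bu \cite{Z1}, and the authors only add a Remark observing that the connectedness hypothesis on $H_1,\dots,H_n$ in the original can be dropped. So there is nothing in the paper to compare your argument against line by line. Your derivation is a clean, self-contained spectral proof: the splitting $\mathbb{R}^{V(G_0)}=V_\parallel\oplus V_\perp$ is the standard equitable-partition decomposition, the eigenvalues $\mu_k(H_i)+d_i$ on $V_\perp$ come out exactly as needed, and you correctly identify and resolve the one delicate point, namely that $\widetilde M=D^{-1/2}L_{G,w}D^{-1/2}$ is only congruent (not similar) to the weighted Laplacian, so its pseudo-determinant must be extracted via $\det'(\widetilde M)=\sum_i\det(\widetilde M[\bar i\,|\,\bar i])$ together with the weighted matrix--tree theorem; the resulting factor $\bigl(\sum_i n_i\bigr)/\prod_i n_i$ cancels against Kirchhoff's normalization as you say. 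Two small points worth making explicit if you write this up: the minor-sum identity for $\det'$ requires nullity exactly one, which holds here because $G$ is connected and the weights $n_in_j$ are positive; and since $d_i>0$ for $n\ge 2$, no eigenvalue from $V_\perp$ vanishes even when some $H_i$ is disconnected --- so your argument also proves the strengthened form asserted in the paper's Remark.
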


\begin{remark}
In the original version of Theorem \ref{x111}, that is, Theorem $6.22$ of \cite{Z1}, the graphs $H_1,H_2,\ldots, H_n$ are required to be connected. However, through the proof of  Theorem $6.22$ in \cite{Z1}, it can be found that the connectivity condition of $H_1,H_2,\ldots, H_n$ is not needed.  Therefore, the result holds in Theorem \ref{x111} (just from the original proof).
\end{remark}

\begin{lemma}[\hspace{1sp}{\cite{L1}}]\label{x10}
Let $n=s+\sum_{i=1}^{t}n_{i}$. Then
\begin{displaymath}
\tau(K_{s}\vee(K_{n_{1}}\cup K_{n_{2}}\cup\cdots\cup K_{n_{t}}))=n^{s-1}s^{t-1}\prod_{i=1}^{t}(s+n_{i})^{n_{i}-1}.
\end{displaymath}
\end{lemma}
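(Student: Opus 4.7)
The plan is to derive the formula by realizing the target graph as a generalized join and then applying Theorem~\ref{x111} directly. Set $G=K_{1,t}$ with central vertex $v_{0}$ and leaves $v_{1},\dots,v_{t}$, and set $H_{0}=K_{s}$, $H_{i}=K_{n_{i}}$ for $1\le i\le t$. Then every vertex of $H_{0}$ is joined to every vertex of each $H_{i}$ (since $v_{0}v_{i}\in E(G)$), and no edges are placed between $H_{i}$ and $H_{j}$ for $i,j\ge1$. Hence
$$
K_{s}\vee\bigl(K_{n_{1}}\cup\cdots\cup K_{n_{t}}\bigr)\;=\;G[H_{0},H_{1},\dots,H_{t}],
$$
so Theorem~\ref{x111} is applicable. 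I would then record the two ingredients its formula requires: the Laplacian spectrum of $K_{m}$ (namely $\mu_{k}(K_{m})=m$ for $1\le k\le m-1$) and the spanning trees of the base graph $K_{1,t}$.

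Next I would evaluate the vertex-product in Theorem~\ref{x111} one vertex at a time. For the center $v_{0}$, the neighbour-size sum is $\sum_{i=1}^{t}n_{i}=n-s$, so the numerator contributes $\prod_{k=1}^{s-1}(s+(n-s))=n^{s-1}$, and dividing by $|V(H_{0})|=s$ gives the factor $n^{s-1}/s$. For each leaf $v_{i}$, the neighbour-size sum is just $s$, producing $\prod_{k=1}^{n_{i}-1}(n_{i}+s)=(n_{i}+s)^{n_{i}-1}$, and division by $n_{i}$ yields $(n_{i}+s)^{n_{i}-1}/n_{i}$.

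The tree-sum in Theorem~\ref{x111} is trivial, which is the whole point of choosing $G=K_{1,t}$: the star has a unique spanning tree, itself, whose edges are $v_{0}v_{i}$ for $1\le i\le t$. Therefore
$$
\sum_{T\in\mathcal{T}_{G}}\prod_{v_{i}v_{j}\in E(T)}n_{i}n_{j}\;=\;\prod_{i=1}^{t}s\cdot n_{i}\;=\;s^{t}\prod_{i=1}^{t}n_{i},
$$
where I have overloaded $n_{i}$ with the ambient notation of the theorem. Multiplying the vertex-product with this tree-sum gives
$$
\tau\;=\;\frac{n^{s-1}}{s}\cdot\prod_{i=1}^{t}\frac{(n_{i}+s)^{n_{i}-1}}{n_{i}}\cdot s^{t}\prod_{i=1}^{t}n_{i}\;=\;n^{s-1}s^{t-1}\prod_{i=1}^{t}(s+n_{i})^{n_{i}-1},
$$
which is exactly the claimed identity.

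There is no genuine obstacle: the only risks are bookkeeping ones — keeping the notational clash between the lemma's $n_{i}$ and the theorem's $n_{i}=|V(H_{i})|$ straight, and making sure the factor of $s$ coming from dividing by $|V(H_{0})|$ combines correctly with $s^{t}$ from the tree-sum to produce $s^{t-1}$. As a cross-check, one can verify the formula in the extreme case $t=1$, $n_{1}=n-s$, where the graph becomes $K_{n}$ and the formula collapses to $n^{s-1}(s+n-s)^{n-s-1}=n^{n-2}$, recovering Cayley's count. An entirely alternative route would use the electrical-network reductions listed before the lemma (parallel/serial merging and the mesh–star transformation) to collapse each $K_{n_{i}}$ to a weighted star and then compute directly, but the generalized-join approach is shorter and requires only a single application of a result already available in the excerpt.
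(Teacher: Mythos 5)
Your proof is correct. Note, however, that the paper does not prove Lemma~\ref{x10} at all: it is imported verbatim from the cited reference \cite{L1}, so there is no internal proof to compare against. Your derivation via Theorem~\ref{x111} is a legitimate, self-contained route: the identification $K_{s}\vee(K_{n_{1}}\cup\cdots\cup K_{n_{t}})=K_{1,t}[K_{s},K_{n_{1}},\dots,K_{n_{t}}]$ is exact, the Laplacian spectrum of $K_{m}$ is used correctly, the star's unique spanning tree makes the tree-sum collapse to $s^{t}\prod_{i}n_{i}$, and the bookkeeping that turns $s^{t}/s$ into $s^{t-1}$ is right; the $t=1$ sanity check against Cayley's formula is a nice touch. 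This is also entirely consistent with the toolkit the paper itself deploys for analogous computations --- Theorem~\ref{x111} is exactly how the authors evaluate $\tau(B(1,0,a_{3},0,r,a_{6}))$ in Lemma~\ref{blemma2}, while the electrical-network route you mention as an alternative is the one they actually take for the companion formula in Lemma~\ref{x112}. The only caveat worth recording is an implicit hypothesis: the argument (and the formula) presumes $s\ge 1$ and $n_{i}\ge 1$, since for $s=0$ and $t\ge 2$ the join is disconnected and the expression $n^{s-1}$ degenerates; the original source and the paper's usage both satisfy this, so nothing is lost.
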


\begin{lemma}\label{x7}
For $s\geq0$ and $x\geq2$, the function
$f(x)=\frac{(s+x+1)^{x}}{(s+x)^{x-1}}$
is strictly increasing in $x$.
\end{lemma}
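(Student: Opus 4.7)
The plan is to treat $f$ as a smooth function on $[2,\infty)$ and show its derivative is positive. Taking logarithms gives
\[
\ln f(x) = x \ln(s+x+1) - (x-1)\ln(s+x),
\]
and differentiating yields
\[
\frac{f'(x)}{f(x)} = \ln\!\left(1+\frac{1}{s+x}\right) + \frac{x}{s+x+1} - \frac{x-1}{s+x}.
\]
The strategy is to replace the logarithm by a convenient lower bound and check that what remains is manifestly positive.

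The natural bound is the standard inequality $\ln(1+t) > t/(1+t)$ for $t>0$; taking $t=1/(s+x)$ gives
\[
\ln\!\left(1+\frac{1}{s+x}\right) > \frac{1}{s+x+1}.
\]
Substituting this into the expression for $f'(x)/f(x)$ reduces the problem to verifying that
\[
\frac{x+1}{s+x+1} - \frac{x-1}{s+x} > 0,
\]
which after putting over a common denominator has numerator $(x+1)(s+x)-(x-1)(s+x+1)=2s+x+1$, positive for $s\ge 0$ and $x\ge 2$. Since $f(x)>0$, this gives $f'(x)>0$ and hence strict monotonicity.

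There is essentially no obstacle: the only subtlety is choosing the log inequality sharp enough so that the remainder beats the negative term $-(x-1)/(s+x)$. The slightly weaker bound $\ln(1+t)>t - t^2/2$ would not suffice for large $x$, but $\ln(1+t)>t/(1+t)$ does, because it leaves exactly the $1/(s+x+1)$ needed to combine with $x/(s+x+1)$ into $(x+1)/(s+x+1)$. If one prefers to avoid calculus and work with integer $x$ (as the lemma will be applied to integer degree parameters in the sequel), the same idea can be rephrased by showing $f(x+1)/f(x)>1$, for instance by writing the ratio as $(1-1/b^2)^{x-1}(1+1/b)^2$ with $b=s+x+1$ and taking logs, but the continuous argument above is shorter and yields strict monotonicity on the whole real interval.
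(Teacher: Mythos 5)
Your proof is correct and follows essentially the same route as the paper: take logarithms, differentiate, and show the derivative is positive. The paper's version is marginally more direct — it combines $\tfrac{x}{s+x+1}-\tfrac{x-1}{s+x}$ into $\tfrac{s+1}{(s+x+1)(s+x)}>0$ and observes that the logarithmic term is already positive on its own, so no lower bound on $\ln(1+t)$ is needed; your bound $\ln(1+t)>t/(1+t)$ is valid but superfluous.
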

\begin{proof}
Since $s+x+1> s+x\geq 2$, we let
$h(x)=\ln f(x)=x\ln(s+x+1)-(x-1)\ln(s+x)$.
After taking the first derivative of $h(x)$, we have
\begin{displaymath}
\begin{split}
h'(x)&=\frac{x}{s+x+1}-\frac{x-1}{s+x}+\ln(s+x+1)-\ln(s+x)\\
&=\frac{s+1}{(s+x+1)(s+x)}+\ln\frac{s+x+1}{s+x}>0.
\end{split}
 \end{displaymath}
This completes the proof.
{\hfill}
\end{proof}

\begin{lemma}\label{aux_lemma_1}
For any $\alpha, \beta, \gamma \in \mathbb{R}$ such that $\alpha, \beta > 0$ and $\alpha + \beta > \gamma$, we have
\[
    \alpha^{\beta - \gamma} \beta^{\alpha - \gamma} \le \left( \tfrac{\alpha + \beta}{2} \right)^{\alpha + \beta - 2 \gamma} .
\]
\end{lemma}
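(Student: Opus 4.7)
The plan is to take logarithms and reduce the claim to a one-variable concavity argument. After taking logs, the statement becomes
\[
(\beta - \gamma) \ln \alpha + (\alpha - \gamma) \ln \beta \;\le\; (\alpha + \beta - 2\gamma) \ln \tfrac{\alpha+\beta}{2}.
\]
Set $s = \alpha + \beta$, which satisfies $s > \gamma$ by hypothesis, and hold $s$ fixed while viewing the left-hand side as a function of $x = \alpha \in (0, s)$. That is, I define
\[
\phi(x) := (s - x - \gamma)\ln x + (x - \gamma)\ln(s - x), \qquad x \in (0, s),
\]
so that the desired inequality becomes $\phi(\alpha) \le \phi(s/2)$, since $\phi(s/2) = (s - 2\gamma)\ln(s/2)$.

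Next I compute the first two derivatives of $\phi$. A short calculation gives
\[
\phi'(x) = -\ln x + \tfrac{s - x - \gamma}{x} + \ln(s - x) - \tfrac{x - \gamma}{s - x},
\]
from which one immediately reads off $\phi'(s/2) = 0$. Differentiating once more yields
\[
\phi''(x) = -\tfrac{1}{x} - \tfrac{s - \gamma}{x^2} - \tfrac{1}{s - x} - \tfrac{s - \gamma}{(s - x)^2},
\]
and this is the only place where the hypothesis enters: because $s - \gamma > 0$, every term is strictly negative, so $\phi$ is strictly concave on $(0, s)$. A strictly concave function with a critical point at $s/2$ attains its unique maximum there, so $\phi(\alpha) \le \phi(s/2)$, which gives the desired inequality after exponentiating.

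I do not anticipate any serious obstacle. The key observation is that the single hypothesis $\alpha + \beta > \gamma$ is exactly what is needed to make $s - \gamma$ positive and therefore $\phi$ concave; the individual signs of $\gamma$, $\alpha - \gamma$ and $\beta - \gamma$ never enter the argument, so no case analysis is required. The only thing to be careful about is recording the formulas for $\phi'$ and $\phi''$ correctly and verifying that the evaluation $\phi'(s/2) = 0$ follows from the symmetric form of the two pairs of terms in $\phi'$.
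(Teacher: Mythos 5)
Your proof is correct and follows essentially the same route as the paper: both fix $s=\alpha+\beta$, study the same one-variable function $(s-x-\gamma)\ln x+(x-\gamma)\ln(s-x)$ on $(0,s)$, and show its maximum is attained at $x=s/2$. The only cosmetic difference is that you establish this via strict concavity ($\phi''<0$) together with the critical point at $s/2$, whereas the paper reads off the sign of $\phi'$ directly from the factored form $\frac{(s-\gamma)(s-2x)}{x(s-x)}+\ln\frac{s-x}{x}$; both hinge on the same hypothesis $s-\gamma>0$.
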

\begin{proof}
Let $s = \alpha + \beta$ and let $f \colon (0, s) \to \mathbb{R}$ be the function defined by
\[
    f(x) = (s - x - \gamma) \ln x + (x - \gamma) \ln (s - x) .
\]
A quick computation gives $f'(x) = \frac{(s - \gamma) (s - 2x)}{x(s - x)} + \ln \frac{s - x}{x}
$
for any $x \in (0, s)$. Then $f'(x) > 0$  for any $x \in (0, \tfrac{s}{2})$, while $f'(x) < 0$  for any $x \in (\tfrac{s}{2}, s)$. This means that $f(x)$ attains its maximum value uniquely in $\tfrac{s}{2}$, which implies $f(\alpha) \le f(\tfrac{s}{2})$. Therefore,
$(\beta - \gamma) \ln \alpha + (\alpha - \gamma) \ln \beta \le (\alpha + \beta - 2\gamma) \ln \tfrac{\alpha + \beta}{2}$, the result follows.
\end{proof}

\begin{lemma}\label{aux_lemma_2}
For any $\alpha, \beta, \gamma, \xi \in \mathbb{R}$ such that $\gamma, \xi > 0$ and $\alpha, \beta \ge \xi$, we have
\[
    \alpha^{\alpha - \gamma} \beta^{\beta - \gamma} \le \xi^{\xi - \gamma} (\alpha + \beta - \xi)^{\alpha + \beta - \xi- \gamma}.
\]
\end{lemma}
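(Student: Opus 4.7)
My plan is to reduce the inequality to a one-variable monotonicity statement, in the same spirit as the proof of Lemma~\ref{aux_lemma_1}. First, observe that both sides are symmetric in $\alpha$ and $\beta$, so I would assume without loss of generality that $\alpha \le \beta$. Combined with $\alpha, \beta \ge \xi$, this forces $\xi \le \alpha \le \tfrac{\alpha+\beta}{2}$. Taking logarithms, the claim becomes
\[
    (\alpha - \gamma)\ln\alpha + (\beta - \gamma)\ln\beta \le (\xi - \gamma)\ln\xi + (\alpha + \beta - \xi - \gamma)\ln(\alpha + \beta - \xi).
\]

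Next, writing $s = \alpha + \beta$, I would introduce the function $h \colon (0, s) \to \mathbb{R}$ defined by
\[
    h(x) = (x - \gamma)\ln x + (s - x - \gamma)\ln(s - x),
\]
so that the target inequality is exactly $h(\alpha) \le h(\xi)$. A routine differentiation gives
\[
    h'(x) = \ln\frac{x}{s - x} + \gamma \cdot \frac{2x - s}{x(s - x)},
\]
and for $x \in (0, s/2)$ both summands are strictly negative, since $\tfrac{x}{s-x} < 1$ and $2x - s < 0$ (here I use $\gamma > 0$). Hence $h$ is strictly decreasing on $(0, s/2]$. Because $\xi \le \alpha \le s/2$, the endpoints $\xi$ and $\alpha$ both lie in this decreasing portion, and $h(\alpha) \le h(\xi)$ follows at once.

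The only points that require a bit of care are verifying the WLOG step (both sides of the claimed inequality are symmetric in $\alpha$ and $\beta$, so relabelling is harmless) and confirming that $[\xi, \alpha]$ truly lies inside the decreasing interval $(0, s/2]$, which uses precisely the hypothesis $\alpha \le \beta$ together with $\xi \le \alpha$. I do not anticipate a serious obstacle: unlike Lemma~\ref{aux_lemma_1}, where the optimum of the auxiliary function sits at the midpoint $s/2$, here the extremal configuration occurs at the boundary value $\xi$, so plain monotonicity on one side of the midpoint is exactly what is needed.
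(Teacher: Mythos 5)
Your proof is correct and follows essentially the same route as the paper: the same auxiliary function $(x-\gamma)\ln x + (s-x-\gamma)\ln(s-x)$ with the same derivative computation, the only cosmetic difference being that you invoke symmetry in $\alpha,\beta$ to restrict to the decreasing half $(0,s/2]$, whereas the paper works on $[\xi, s-\xi]$ and uses $f(\xi)=f(s-\xi)$ to place the maximum at the endpoints.
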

\begin{proof}
Let $s = \alpha + \beta$ and let $f \colon [\xi, s - \xi] \to \mathbb{R}$ be the function defined by
\[
    f(x) = (x - \gamma) \ln x + (s - x - \gamma) \ln (s - x).
\]
It is straightforward to compute that $f'(x) = \frac{\gamma (2x - s)}{x(s - x)} + \ln \frac{x}{s - x}$,
which means that $f'(x) < 0$ holds for any $x \in (\xi, \frac{s}{2})$ and $f'(x) > 0$ holds for any $x \in (\frac{s}{2}, s - \xi)$. Therefore, $f(x)$ attains its maximum value in at least one of the points $\xi$ and $s - \xi$. Since
$f(\xi) = f(s - \xi) = (\xi - \gamma) \ln \xi + (s - \xi- \gamma) \ln (s - \xi)$,
 then the result follows from $f(\alpha) \le f(\xi)$.
\end{proof}

Suppose that $A$ and $B$ are two disjoint subsets of $V(G)$ in a graph $G$. We let $[A,B]=\{uv\in E(G):u\in A, v\in B\}$.

\begin{lemma}[\hspace{1sp}{\cite{G3}}]\label{x4}
Let $G$ be a graph with minimum degree $\delta$ and $U$ be a non-empty proper subset of $V(G)$. If $|[U,V(G)\setminus U]|\leq \delta-1$, then $|U|\geq \delta+1$.
\end{lemma}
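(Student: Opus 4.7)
The plan is to argue by contradiction. I would assume that $|U| \le \delta$ and derive a lower bound on the size of the cut $[U, V(G)\setminus U]$ that violates the hypothesis $|[U, V(G)\setminus U]| \le \delta - 1$.

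First, I would pick an arbitrary vertex $u \in U$ and count its neighbors. Since $d_G(u) \ge \delta$ and the number of neighbors of $u$ lying inside $U$ is at most $|U| - 1$, it follows that $u$ has at least $\delta - (|U| - 1) = \delta - |U| + 1$ neighbors in $V(G) \setminus U$. Each such neighbor contributes a distinct edge of the cut incident to $u$, and since every edge of $[U, V(G)\setminus U]$ has exactly one endpoint in $U$, summing over all $u \in U$ gives
\[
    |[U, V(G)\setminus U]| \;\ge\; |U|\,(\delta - |U| + 1).
\]

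Next, I would verify the elementary inequality $k(\delta - k + 1) \ge \delta$ for every integer $1 \le k \le \delta$. This follows from the identity
\[
    k(\delta - k + 1) - \delta = (k-1)(\delta - k),
\]
whose right-hand side is non-negative in the prescribed range. Plugging $k = |U|$ into this inequality yields $|[U, V(G)\setminus U]| \ge \delta$, which contradicts the assumption $|[U, V(G)\setminus U]| \le \delta - 1$. Hence $|U| \ge \delta + 1$, completing the argument.

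No real obstacle is anticipated here; the proof is a one-line degree count combined with a quadratic-style identity. The only point worth being careful about is the double-counting: because the edges under consideration cross the cut, each is counted exactly once in the sum $\sum_{u \in U} d_{V(G) \setminus U}(u)$, so no factor of $2$ intervenes.
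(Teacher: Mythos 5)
Your proof is correct: the degree count $|[U,V(G)\setminus U]|\ge |U|(\delta-|U|+1)$ together with the factorization $k(\delta-k+1)-\delta=(k-1)(\delta-k)\ge 0$ for $1\le k\le\delta$ gives the desired contradiction, and the double-counting is handled properly since each cut edge has exactly one endpoint in $U$. The paper does not prove this lemma at all — it is quoted from the cited reference — and your argument is precisely the standard one used there, so there is nothing to reconcile.
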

Denote by $\mathbb{V}_{n}^{r}$  (resp.\ $\mathbb{E}_{n}^{r}$) the set of connected graphs of order $n$ with vertex (resp.\ edge) connectivity $r$.  Li, Shiu and Chang \cite{L1} derived the following result.
\begin{theorem}[\hspace{1sp}{\cite{L1}}]\label{100}
Let $G\in \mathbb{V}_{n}^{r}$ (or $\mathbb{E}_{n}^{r}$)  with $n\geq r+1$.  Then
$\tau(G)\leq r n^{r-1}(n-1)^{n-r-2},$
where the equality holds if and only if $G\cong (K_{1}\cup K_{n-r-1})\vee K_{r}$.
\end{theorem}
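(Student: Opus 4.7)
The plan is to establish the vertex connectivity case first and then derive the edge case by comparing the resulting bound against the vertex connectivity value. The base case $n = r+1$ is trivial (only $K_{r+1}$ exists), so assume $n \geq r+2$. For $G \in \mathbb{V}_n^r$, fix a minimum vertex cut $S$ of size $r$ and let $V_1, \ldots, V_t$ (with $t \geq 2$) be the components of $G - S$, setting $n_i := |V_i|$. Any missing edge inside $G[S]$, inside $G[V_i]$, or between $S$ and $V(G)\setminus S$ can be added without merging the $V_i$, so $S$ remains a vertex cut and the new graph still has vertex connectivity $r$. By Lemma~\ref{x3} each addition strictly increases $\tau$, so it suffices to bound $\tau(G^*)$ for $G^* = K_r \vee (K_{n_1} \cup \cdots \cup K_{n_t})$. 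Lemma~\ref{x10} yields the closed form
\[
\tau(G^*) = n^{r-1}\,r^{t-1}\prod_{i=1}^{t}(r+n_i)^{n_i-1}.
\]

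For fixed $t$, I would iteratively apply Lemma~\ref{aux_lemma_2} with $\alpha = r+n_i$, $\beta = r+n_j$, $\gamma = \xi = r+1$ to any pair with $n_i, n_j \geq 2$, obtaining
\[
(r+n_i)^{n_i-1}(r+n_j)^{n_j-1} \leq (r+n_i+n_j-1)^{n_i+n_j-r-2} \leq (r+n_i+n_j-1)^{n_i+n_j-2},
\]
the second step using $r+n_i+n_j-1 \geq 1$ together with $r \geq 1$; the right-hand side equals the factor produced by replacing the pair with $(1, n_i+n_j-1)$. Iterating reduces to the partition $(n-r-t+1, 1, \ldots, 1)$, giving $\tau(G^*) \leq n^{r-1} r^{t-1}(n-t+1)^{n-r-t}$, and by the equality case of Lemma~\ref{aux_lemma_2} any pair with both parts $\geq 2$ produces a strict increase.

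It remains to optimize over $t$. Write $\psi(t) := r^{t-1}(n-t+1)^{n-r-t}$ and note that with the substitution $x := n-r-t$, the inequality $\psi(t)/\psi(t+1) > 1$ becomes $(r+x+1)^{x}/(r+x)^{x-1} > r$ for $x \in \{1, \ldots, n-r-2\}$. By Lemma~\ref{x7} with $s = r$, the left-hand side $f(x)$ is strictly increasing for $x \geq 2$, and a direct computation gives $f(1) = r+2 > r$ and $f(2) = (r+3)^2/(r+2) > r$, so the bound holds on the entire required range. Hence $\psi$ is strictly decreasing on $\{2, \ldots, n-r\}$, so $\tau(G) \leq \tau(G^*) \leq n^{r-1}\psi(2) = r n^{r-1}(n-1)^{n-r-2}$. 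Tracing equality through each step forces no added edges, $t = 2$, and the partition $\{1, n-r-1\}$, i.e.\ $G \cong (K_1 \cup K_{n-r-1}) \vee K_r$.

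For the edge connectivity version, put $r' := \kappa(G) \leq \kappa'(G) = r$; the vertex bound applied to $G$ gives $\tau(G) \leq r' n^{r'-1}(n-1)^{n-r'-2}$. The ratio
\[
\frac{r'\, n^{r'-1}(n-1)^{n-r'-2}}{r\, n^{r-1}(n-1)^{n-r-2}} = \frac{r'}{r}\left(\frac{n-1}{n}\right)^{r-r'}
\]
is strictly less than $1$ when $r' < r$, giving a strict bound in that subcase; when $r' = r$, the equality case of the vertex argument forces $G \cong (K_1 \cup K_{n-r-1}) \vee K_r$, which indeed has edge connectivity $r$ (the $r$ edges from the $K_1$-vertex to $K_r$ form an edge cut). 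The main obstacle is the two-stage optimization of the Lemma~\ref{x10} closed form: Lemma~\ref{aux_lemma_2} combined with a crude exponent comparison drives the partition-unbalancing step, while Lemma~\ref{x7} supplemented by a hand check at $x = 1, 2$ drives the $t$-reduction; care is needed to track the strict inequality through all reductions in order to pin down the equality case exactly.
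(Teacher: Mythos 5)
Your argument is essentially sound and reaches the correct conclusion, but note that the paper does not prove this statement from scratch: it quotes it from \cite{L1} and then re-obtains it as a corollary of Theorem \ref{l1}, via the observation that $h(x)=rn^{r-1}(x+1)^{x-r}(r+n-x-1)^{n-x-2}$ is decreasing in $x\ge r$ together with $\mathbb{V}_n^r=\bigcup_{\delta\ge r}\mathbb{V}_{n,\delta}^r$. Your direct proof shares the skeleton of the paper's proof of Theorem \ref{l1} (complete everything outside the cut using Lemma \ref{x3}, invoke Lemma \ref{x10}, unbalance via Lemma \ref{x7}), but where the paper disposes of $t\ge 3$ at the outset --- by adding all edges between $G_i$ and $G_j$ for $2\le i<j\le t$, which keeps $S$ a vertex cut and strictly increases $\tau$ --- you keep $t$ general and pay for it with an extra optimization over $t$ and an appeal to Lemma \ref{aux_lemma_2}. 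Both routes work; the paper's shortcut is cleaner. Your handling of the $\mathbb{E}_n^r$ case via $\kappa\le\kappa'$ and the ratio computation is exactly the device the paper uses later in Corollary \ref{bipe_cor}, and it is correct.

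One slip to repair: applying Lemma \ref{aux_lemma_2} with $\alpha=r+n_i$, $\beta=r+n_j$, $\gamma=\xi=r+1$ gives the exponent $\alpha+\beta-\xi-\gamma=n_i+n_j-2$, not $n_i+n_j-r-2$, so the first inequality in your displayed chain is false as written (take $r=2$ and $n_i=n_j=2$: it asserts $16\le 5^0=1$). The lemma in fact yields $(r+n_i)^{n_i-1}(r+n_j)^{n_j-1}\le (r+n_i+n_j-1)^{n_i+n_j-2}$ directly, and the right-hand side is precisely the factor corresponding to the partition $(1,n_i+n_j-1)$; delete the spurious intermediate term and the rest of your argument, including the strictness bookkeeping for the equality case, goes through unchanged.
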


\section{Graphs with given connectivity and minimum degree}
\subsection{Vertex connectivity and minimum degree}

For any $r \in \mathbb{N}$, the unique  graph of smallest order with vertex connectivity $r$ is $K_{r + 1}$. Suppose that $G\in \mathbb{V}_{n,\delta}^{r}$ and $G \not\cong K_{r + 1}$, so that $G$ contains a vertex cut $S$ of cardinality $r$ such that $G_{1}, G_{2}, \ldots, G_{t}$ with $t \ge 2$ are  all the connected components of $G - S$. Since $d_{G}(u)\geq \delta$ for each vertex $u\in V(G_i)$, we have $|S\cup V(G_i)|\geq \delta+1$ for $1\leq i\leq t$. Then
$ n = t \, |S \cup V(G_i)| - (t-1) \, |S| \ge 2(\delta + 1) - r$ since $t\geq 2$.
 As it turns out, $K_{r + 1}$ is the only graph with vertex connectivity $r$ when  $n<2(\delta + 1) - r$. Thus, we  assume that $n\geq 2(\delta+1)-r$ in  the following main result.

\begin{theorem}\label{l1}
Let $G\in \mathbb{V}_{n,\delta}^{r}$ with $n\geq 2(\delta+1)-r$. Then
\begin{displaymath}
\tau(G)\leq r n^{r-1}(\delta+1)^{\delta-r}(r+n-\delta-1)^{n-\delta-2},
\end{displaymath}
where the equality holds if and only if $G\cong K_{r}\vee(K_{\delta-r+1}\cup K_{n-\delta-1})$.
\end{theorem}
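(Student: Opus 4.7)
My plan is to work in the enlarged class $\widetilde{\mathbb{V}} := \{G : |V(G)| = n,\, \kappa(G) = r,\, \delta(G) \geq \delta\}$, which contains $\mathbb{V}_{n,\delta}^r$, and show that every $\tau$-maximizer in $\widetilde{\mathbb{V}}$ is isomorphic to $K_r \vee (K_{\delta-r+1} \cup K_{n-\delta-1})$. Since this graph itself lies in $\mathbb{V}_{n,\delta}^r$, the conclusion will transfer to the original class at once, giving both the claimed bound and the uniqueness.

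First I would argue that any $\tau$-maximizer $G \in \widetilde{\mathbb{V}}$, with a fixed minimum vertex cut $S$ of size $r$ and components $G_1, \ldots, G_t$ (with $t \geq 2$) of $G - S$, must have the form $K_r \vee (K_{n_1} \cup K_{n_2})$. For any non-edge $uv$ of $G$ falling under case (a) --- $u, v \in S$, or $u, v \in V(G_i)$ for some $i$, or $u \in S$ with $v \in V \setminus S$ --- the set $S$ is still a vertex cut of $G + uv$; for any non-edge $uv$ falling under case (b) --- $u \in V(G_i)$ and $v \in V(G_j)$ with $i \neq j$ and $t \geq 3$ --- the set $S$ still separates $t - 1 \geq 2$ components of $(G+uv) - S$. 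In either case $\kappa(G + uv) = r$ and $\delta(G + uv) \geq \delta$, so $G + uv \in \widetilde{\mathbb{V}}$, and Lemma~\ref{x3} would then give $\tau(G + uv) > \tau(G)$, contradicting maximality. Consequently no such non-edge exists, which forces $t = 2$, $G = K_r \vee (K_{n_1} \cup K_{n_2})$ with $n_1 + n_2 = n - r$, and $n_1, n_2 \geq \delta - r + 1$ (from $\delta(G) \geq \delta$).

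Then I would insert this structure into Lemma~\ref{x10} to obtain
\[
    \tau(G) = r\,n^{r-1}\,(r + n_1)^{n_1 - 1}\,(r + n_2)^{n_2 - 1},
\]
and apply Lemma~\ref{aux_lemma_2} with $\alpha = r + n_1$, $\beta = r + n_2$, $\gamma = r + 1$, $\xi = \delta + 1$. The hypotheses $\alpha, \beta \geq \xi$ follow from $n_i \geq \delta - r + 1$, and $\alpha + \beta = n + r$, so the lemma yields
\[
    (r + n_1)^{n_1 - 1}(r + n_2)^{n_2 - 1} \leq (\delta + 1)^{\delta - r}(n + r - \delta - 1)^{n - \delta - 2},
\]
with equality if and only if $\{n_1, n_2\} = \{\delta - r + 1,\, n - \delta - 1\}$, that is, $G \cong K_r \vee (K_{\delta - r + 1} \cup K_{n - \delta - 1})$. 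Multiplying by $r n^{r-1}$ delivers the claimed upper bound and pins down the unique extremal graph.

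The main obstacle is conceptual rather than computational: working directly inside $\mathbb{V}_{n,\delta}^r$ is awkward because adding an edge to an extremal candidate could strictly raise the minimum degree and eject us from the class, which would block a clean application of Lemma~\ref{x3} relative to a fixed vertex cut. Relaxing the minimum-degree condition to $\delta(G) \geq \delta$ removes this obstruction and makes the structural reduction to $K_r \vee (K_{n_1} \cup K_{n_2})$ a uniform application of Lemma~\ref{x3}; after that, the remaining two-variable optimization is exactly what Lemma~\ref{aux_lemma_2} was tailored to handle.
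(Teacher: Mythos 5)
Your proposal is correct, and it follows the same two-stage skeleton as the paper --- first reduce the extremal graph to $K_r\vee(K_{n_1}\cup K_{n_2})$, then optimize over the split $n_1+n_2=n-r$ --- but it executes both stages differently. For the structural stage, the paper stays inside $\mathbb{V}_{n,\delta}^{r}$: it only adds edges among the components $G_2,\ldots,G_t$ to force $t=2$ (taking care that the degree-$\delta$ vertex survives), and then compares $G^*$ with the supergraph $\widetilde G=K_r\vee(K_{n_1}\cup K_{n_2})$, which may fail to lie in the class; the chain of inequalities still closes because the final graph of the chain does lie in it. Your relaxation to $\delta(G)\ge\delta$ makes every edge addition legal at once and yields the complete-join form of the maximizer directly, which is arguably cleaner and sidesteps the case analysis on where the degree-$\delta$ vertex sits. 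For the optimization stage, the paper iterates the swap $(n_1,n_2)\mapsto(n_1+1,n_2-1)$ using the monotonicity in Lemma~\ref{x7}, whereas you invoke Lemma~\ref{aux_lemma_2} once, with $\alpha=r+n_1$, $\beta=r+n_2$, $\gamma=r+1$, $\xi=\delta+1$; the paper only deploys that lemma in the bipartite section, but your substitution checks out exactly and produces the stated bound. The one point to patch: Lemma~\ref{aux_lemma_2} as stated gives only a non-strict inequality, while your uniqueness claim needs strictness whenever $\{n_1,n_2\}\neq\{\delta-r+1,\,n-\delta-1\}$; this does follow from the strict monotonicity of the function $f$ established in that lemma's proof, but you should say so explicitly rather than cite the bare statement.
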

\begin{proof}
Let $G^{*}\in \mathbb{V}_{n,\delta}^{r}$ attain the maximum number of spanning trees with  a vertex cut $S$ of $G^{*}$ of cardinality $r$. Assume that  $G^{*}-S=G_{1}\cup G_{2}\cup\cdots \cup G_{t}$ with $t\geq 2$, where each $G_{i}$ ($1\leq i \leq t$) is a connected  component of $G^{*}-S$ and $v_{0}$ is a vertex of degree $\delta$ in $G^{*}$. We first claim that $t = 2$. Indeed, if $t\geq 3$, whether $v_{0} \in S$ or not (assume, without loss of generality, that $v_{0}\in V(G_{1})$), we can  add all the possible edges between $G_{i}$ and $G_{j}$ for $2\leq i < j\leq t$, and the resultant graph still belongs to  $\mathbb{V}_{n,\delta}^{r}$ with more spanning trees, contradicting the choice of $G^*$ by  Lemma \ref{x3}. Thus, $t = 2$ holds as desired.

Let $|V(G_{i})|=n_{i}$ with $i = 1, 2$. Since $\delta(G^*)=\delta\geq r$, from the structure of $G^*$, we have $n_{i}\geq \delta-r+1$ for  $i=1,2$. Note that $n_{1}+n_{2}=n-r$ and  we let $\widetilde{G}=K_{r}\vee (K_{n_{1}}\cup K_{n_{2}})$. Then $G^{*}$ is a spanning subgraph of $\widetilde{G}$ and $\tau(G^{*})\leq\tau(\widetilde{G})$ by Lemma \ref{x3}.

Without loss of generality, assume that $n_{1}\geq n_{2}\geq \delta-r+1$.  By Lemma \ref{x10}, we have
$\tau(\widetilde{G})=r n^{r-1}(r+n_{1})^{n_{1}-1}(r+n_{2})^{n_{2}-1}.$
Note that if $ n_{2}= \delta-r+1$, then  $\widetilde{G}= K_{r}\vee(K_{\delta-r+1}\cup K_{n-\delta-1})\in\mathbb{V}_{n,\delta}^{r}$. If $n_{1}\geq n_{2}\geq \delta-r+2$, then we can replace the pair $(n_{1},n_{2}) $ with $(n_{1}+1,n_{2}-1)$ satisfying $n_{1}+1\geq n_{2}-1\geq \delta-r+1$. Note that $n_2-1<n_1$. By Lemma \ref{x7}  with $s=r$, we have
\begin{displaymath}
\frac{(r+n_{2})^{n_{2}-1}} {(r+n_{2}-1)^{n_{2}-2}} < \frac{(r+n_{1}+1)^{n_{1}}}{(r+n_{1})^{n_{1}-1}}.
\end{displaymath}
Hence, $(r+n_{1})^{n_{1}-1}(r+n_{2})^{n_{2}-1}<(r+n_{1}+1)^{n_{1}}(r+n_{2}-1)^{n_{2}-2}$,  which implies that  $\tau(\widetilde{G})\leq\tau(K_{r}\vee(K_{\delta-r+1}\cup K_{n-\delta-1}))$, with equality holding if and only if $\widetilde{G}\cong K_{r}\vee(K_{\delta-r+1}\cup K_{n-\delta-1})$. Combining $\tau(G^{*})\leq\tau(\widetilde{G})$ and the maximality of $G^{*}$, we have $G^*\cong K_{r}\vee(K_{\delta-r+1}\cup K_{n-\delta-1})$. This completes the proof.
\end{proof}

For a given $r \in \mathbb{N}$ and $n \ge r + 2$, let $h(x) = r n^{r-1}(x+1)^{x-r}(r+n-x-1)^{n-x-2}$ be a function with $x \geq r$ and $n \ge 2(x + 1) - r$. It is routine to check that $h(x)$ is decreasing.  Since $\mathbb{V}_{n}^{r}=\bigcup_{\delta\ge r}\mathbb{V}_{n,\delta}^{r}$, Theorem \ref{100} can be as a corollary of Theorem \ref{l1}.

\subsection{Edge connectivity and minimum degree}

Let $n_{1},n_{2},q$ be positive integers such that $n_{1}\geq n_{2}\geq q $ and $n_{1}+n_{2}=n$. Let $M_{n_{1},n_{2}}^{q}$ be  the graph on $n$ vertices obtained by inserting $q$ mutually disjoint edges between the vertex-disjoint union $K_{n_1}\cup K_{n_2}$ of two  complete graphs $K_{n_{1}}$ and $ K_{n_{2}}$.

\begin{lemma}\label{x112} Let $n_{1},n_{2},q$ be positive integers such that $n_{1}\geq n_{2}\geq q$ and  $n_{1}+n_{2}=n$. Then
\[
    \tau(M_{n_{1},n_{2}}^{q})=qn_{1}^{n_{1}-q-1}n_{2}^{n_{2}-q-1}(n_{1}n_{2}+n)^{q-1}.
\]
\end{lemma}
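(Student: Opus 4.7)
The plan is to apply the three electrical-network transformations listed in Section~2 to reduce $M_{n_{1},n_{2}}^{q}$ to a single weighted edge, keeping track of the multiplicative factor accumulated along the way. First, I apply the mesh-star transformation to each of the two cliques $K_{n_{1}}$ and $K_{n_{2}}$. This introduces two new center vertices $c_{1}$ and $c_{2}$, deletes all edges inside the cliques, and replaces them by stars $K_{1,n_{i}}^{\omega}$ with edge weight $n_{i}$, at the cost of a factor of $\frac{1}{n_{1}^{2}n_{2}^{2}}$.

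Next, label the $q$ matching edges as $u_{i}w_{i}$ with $u_{i}\in V(K_{n_{1}})$ and $w_{i}\in V(K_{n_{2}})$ for $i\in [q]$. In the resulting weighted graph $G_{1}$, the $n_{1}-q$ vertices of $V(K_{n_{1}})\setminus\{u_{1},\ldots,u_{q}\}$ are pendants attached to $c_{1}$ via an edge of weight $n_{1}$; as such edges must belong to every spanning tree, deleting them after extracting the weights contributes a factor of $n_{1}^{n_{1}-q}$ to $\tau(G_{1})$. Treating $V(K_{n_{2}})$ analogously extracts an additional factor of $n_{2}^{n_{2}-q}$. The remaining graph $G_{2}$ consists only of $c_{1}$, $c_{2}$, and, for each $i\in [q]$, an internal path $c_{1}\text{-}u_{i}\text{-}w_{i}\text{-}c_{2}$ whose three edges have weights $n_{1}$, $1$, $n_{2}$, with both $u_{i}$ and $w_{i}$ of degree $2$.

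Now I collapse each such path by applying the serial edges merging rule twice. A direct computation gives the effective weight $\frac{n_{1}n_{2}}{n_{1}n_{2}+n}$ for the resulting $c_{1}c_{2}$-edge, and the product of the two normalizing factors turns out to equal $n_{1}n_{2}+n$. Once every path is collapsed, $G_{2}$ is reduced to a multigraph $G_{3}$ on $\{c_{1},c_{2}\}$ with exactly $q$ parallel edges, each of weight $\frac{n_{1}n_{2}}{n_{1}n_{2}+n}$. Merging them via the parallel edges merging rule yields a single edge $c_{1}c_{2}$ of weight $\frac{qn_{1}n_{2}}{n_{1}n_{2}+n}$, whose tree-weight equals that same value.

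Multiplying all accumulated factors together,
\[
\tau(M_{n_{1},n_{2}}^{q})=\frac{1}{n_{1}^{2}n_{2}^{2}}\cdot n_{1}^{n_{1}-q}n_{2}^{n_{2}-q}\cdot(n_{1}n_{2}+n)^{q}\cdot\frac{qn_{1}n_{2}}{n_{1}n_{2}+n}=qn_{1}^{n_{1}-q-1}n_{2}^{n_{2}-q-1}(n_{1}n_{2}+n)^{q-1},
\]
which matches the claim. The only nontrivial piece is verifying the per-path factor $n_{1}n_{2}+n$ in the serial-merging step, so that is where the main obstacle lies; everything else is bookkeeping of factors provided by the standard transformations.
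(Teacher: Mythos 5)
Your proof is correct and follows essentially the same route as the paper: a mesh-star transformation on each clique (factor $\tfrac{1}{n_1^2 n_2^2}$), extraction of the pendant-edge weights (factor $n_1^{n_1-q}n_2^{n_2-q}$), serial merging of each $c_1\text{-}u_i\text{-}w_i\text{-}c_2$ path (factor $n_1n_2+n$ per path), and a final parallel merge; your per-path factor computation checks out. The only difference is that you spell out the intermediate bookkeeping that the paper compresses into a single displayed equation.
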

\begin{proof}
Let $V(K_{n_{1}})=\{u_{1},u_{2}, \ldots,u_{n_{1}}\}$ and $V(K_{n_{2}})=\{v_{1},v_{2}, \ldots, v_{n_{2}}\}$ and $E(M_{n_{1},n_{2}}^{q})=E(K_{n_{1}})\cup E(K_{n_{2}})\cup \bigcup_{1\leq i\leq q}\{u_{i}v_{i}\}$. Using the Mesh-star transformation twice, we can  replace  $K_{n_{1}}$ and $K_{n_{2}}$ by $K_{1,n_{1}}^{\omega}$ and  $K_{1,n_{2}}^{\omega}$, respectively, and obtain a weighted graph $G'$.  Furthermore, in $G'$, the subgraph $K_{1,n_{1}}^{\omega}$ has the vertex set $\{u_{1},u_{2}, \ldots ,u_{n_{1}}\}\cup \{o_{1}\}$ and the edge set $\{o_{1}u_{i} : 1\leq i\leq n_{1}\}$ in which each edge has the weight $n_{1}$, the subgraph $K_{1,n_{2}}^{\omega}$ has the vertex set $\{v_{1},v_{2}, \ldots, v_{n_{2}}\}\cup \{o_{2}\}$ and the edge set $\{o_{2}v_{i} : 1\leq i\leq n_{2}\}$ in which each edge has the weight $n_2$, while all the edges of $G'$ in $\bigcup_{1\leq i\leq q}\{u_{i}v_{i}\}$ have the weight $1$. By the Parallel edges merging, Serial edges merging and the Mesh-star transformation, we have
\begin{displaymath}
\begin{split}
\tau(M_{n_{1},n_{2}}^{q})&=\frac{1}{n_{1}^{2}}\frac{1}{n_{2}^{2}}\tau(G')=\frac{1}{n_{1}^{2}}\frac{1}{n_{2}^{2}}n_{1}^{n_{1}-q}n_{2}^{n_{2}-q} (n_{1}n_{2}+n_{1}+n_{2})^{q}\frac{qn_{1}n_{2}}{n_{1}n_{2}+n_{1}+n_{2}}\\
&=qn_{1}^{n_{1}-q-1}n_{2}^{n_{2}-q-1}(n_{1}n_{2}+n)^{q-1}.\qedhere
\end{split}
\end{displaymath}
\end{proof}

From Theorem \ref{100}, we  obtain the following theorem for $\kappa'(G)=\delta(G)=r$.

\begin{theorem}
Let $G\in \mathbb{E}_{n, r}^{r}$ with $n \ge r + 1$. Then
$\tau(G)\leq r n^{r-1}(n-1)^{n-r-2},
$
where the equality holds if and only if $G\cong (K_{1}\cup K_{n-r-1})\vee K_{r}$.
\end{theorem}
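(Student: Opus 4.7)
The plan is to derive this theorem as an immediate specialization of Theorem~\ref{100} of Li, Shiu and Chang. First I observe the set inclusion $\mathbb{E}_{n,r}^{r}\subseteq \mathbb{E}_{n}^{r}$: any graph whose edge connectivity equals $r$ and whose minimum degree equals $r$ is, a fortiori, a graph with edge connectivity $r$. Consequently, for every $G\in\mathbb{E}_{n,r}^{r}$, Theorem~\ref{100} applies directly and yields $\tau(G)\le rn^{r-1}(n-1)^{n-r-2}$, and it furthermore forces $G\cong G^{*}:=(K_{1}\cup K_{n-r-1})\vee K_{r}$ whenever equality is attained in the larger class $\mathbb{E}_{n}^{r}$.

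The only step remaining is to certify that this unique extremum $G^{*}$ actually belongs to the smaller class $\mathbb{E}_{n,r}^{r}$, since otherwise the equality case would be vacuous. Theorem~\ref{100} already records $\kappa'(G^{*})=r$, so it suffices to check $\delta(G^{*})=r$. A direct degree count settles this: the lone vertex of the $K_{1}$ component is adjacent precisely to the $r$ vertices of $K_{r}$ and so has degree $r$; each vertex of $K_{n-r-1}$ has degree $(n-r-2)+r=n-2$; each vertex of $K_{r}$ has degree $(r-1)+(n-r)=n-1$. Since $n\ge r+1$, both $n-1$ and (when the $K_{n-r-1}$ part is non-empty, i.e.\ $n\ge r+2$) $n-2$ are at least $r$; in the degenerate case $n=r+1$ the graph $G^{*}$ reduces to $K_{r+1}$, which still has minimum degree $r$. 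Hence $\delta(G^{*})=r$ and $G^{*}\in\mathbb{E}_{n,r}^{r}$, making the equality case both realizable and unique.

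There is essentially no obstacle here; the whole assertion is a one-line specialization of Theorem~\ref{100}. The only point that requires any care is that, a priori, the equality case of an inequality proved over a larger class can evaporate when one passes to a subclass, so the small verification that $G^{*}$ still has minimum degree exactly $r$ is what makes the uniqueness portion of the statement meaningful inside $\mathbb{E}_{n,r}^{r}$.
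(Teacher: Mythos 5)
Your proof is correct and matches the paper's approach exactly: the paper also derives this result as an immediate specialization of Theorem~\ref{100} to the case $\kappa'(G)=\delta(G)=r$, offering no further argument. Your additional verification that $(K_{1}\cup K_{n-r-1})\vee K_{r}$ indeed has minimum degree $r$ (so the equality case is realized inside $\mathbb{E}_{n,r}^{r}$) is a worthwhile detail the paper leaves implicit.
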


For any graph $G\in \mathbb{E}_{n,\delta}^{r}$ with $\delta > r$, let  $U=[A,B]$ be an  edge cut of $G$ with $|U|=r$. From Lemma \ref{x4}, we have $\delta+1\leq |A|,|B|\leq n-\delta-1$. Thus, $n\ge 2\delta+2$. In this subsection, we determine the extremal graph which attains the maximum number of spanning trees among the graphs in $\mathbb{E}_{n,\delta}^{r}$ with $\delta > r$ and $n\ge 2\delta+2$.
\begin{theorem}
Let $G\in \mathbb{E}_{n,\delta}^{r}$ with $\delta > r$ and $n \ge 2\delta + 2$.  Then
\[
    \tau(G)\leq r(\delta+1)^{\delta-r}(n-\delta-1)^{n-\delta-r-2}[(\delta+1)(n-\delta-1)+n]^{r-1},
\]
where the equality holds if and only if $G\cong M_{n-\delta-1,\delta+1}^{r}$.
\end{theorem}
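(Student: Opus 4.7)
Let $G^* \in \mathbb{E}_{n,\delta}^{r}$ be extremal and fix an edge cut $U = [A, B]$ of size $r$. Lemma \ref{x4} gives $|A|, |B| \geq \delta + 1$; set $n_1 = |A|$ and $n_2 = |B|$. The plan is to establish the chain $\tau(G^*) \leq \tau(\widetilde{G}) \leq \tau(M_{n_1, n_2}^{r}) \leq \tau(M_{n - \delta - 1, \delta + 1}^{r})$, each step produced by a graph transformation that can only increase $\tau$.

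I would first form $\widetilde{G}$ by inserting every missing within-$A$ and within-$B$ edge, so that $\widetilde{G}[A] = K_{n_1}$, $\widetilde{G}[B] = K_{n_2}$, while keeping the $r$ cross-edges. Repeated use of Lemma \ref{x3} yields $\tau(G^*) \leq \tau(\widetilde{G})$, strict unless $G^*[A]$ and $G^*[B]$ are already complete. Next, I would apply Lemma \ref{xx1} to shift cross-edges into a perfect matching. Since $r < \delta + 1 \leq \min(n_1, n_2)$, each side contains a vertex with no incident cross-edge. Whenever some $u \in A$ has two cross-edges $uv_1, uv_2$ and $w \in A$ has none, the hypotheses of Lemma \ref{xx1} are immediate: $N(w) = V(A) \setminus \{w\} \subseteq V(A) \subseteq N[u]$, $v_1, v_2 \in N(u) \setminus N[w]$, and the edge $v_1 v_2 \in E(K_{n_2})$ provides a path avoiding $N[w] \cup \{u\} = V(A)$. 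Each partial shift strictly increases $\tau$, preserves the completeness of $A$ and $B$, and decreases the cross-degree at $u$; iterating (with analogous shifts on the $B$-side) terminates at $M_{n_1, n_2}^{r}$. Hence $\tau(\widetilde{G}) \leq \tau(M_{n_1, n_2}^{r})$, strict unless the cross-edges of $\widetilde{G}$ already form a matching.

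For the final step, Lemma \ref{x112} lets me rewrite
\[
\tau(M_{n_1, n_2}^r) = r \cdot \frac{n_1^{n_1} n_2^{n_2}}{(n_1 n_2)^{r + 1}} \, (n_1 n_2 + n)^{r - 1},
\]
and optimize over $n_1 + n_2 = n$ with $n_1, n_2 \geq \delta + 1$. The factor $n_1^{n_1} n_2^{n_2}$ is maximized at the extreme choice $(n - \delta - 1, \delta + 1)$ by the strict convexity of $x \log x + (n - x) \log(n - x)$. Setting $p = n_1 n_2$ and $h(p) = (p + n)^{r-1}/p^{r+1}$, a direct logarithmic derivative gives
\[
\frac{h'(p)}{h(p)} = \frac{r-1}{p+n} - \frac{r+1}{p} = -\frac{2p + (r+1)n}{p(p+n)} < 0,
\]
so $h$ is strictly decreasing; since $p = n_1 n_2$ is also minimized at the same extreme by AM-GM, both factors are jointly maximized there. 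Therefore $\tau(M_{n_1, n_2}^r) \leq \tau(M_{n - \delta - 1, \delta + 1}^{r})$, with equality iff $\{n_1, n_2\} = \{\delta + 1, n - \delta - 1\}$.

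Chaining the three inequalities proves the bound, and $M_{n - \delta - 1, \delta + 1}^{r} \in \mathbb{E}_{n,\delta}^{r}$ is easy to verify (minimum degree $\delta$ is attained at a vertex of the $(\delta+1)$-clique missed by the matching, and every non-trivial cut other than $[A,B]$ contains a within-clique contribution of size at least $\delta > r$). For uniqueness, equality in each step forces $G^*[A]$ and $G^*[B]$ to be complete, the cross-edges to form a matching, and $\{n_1, n_2\} = \{\delta + 1, n - \delta - 1\}$; thus $G^* \cong M_{n - \delta - 1, \delta + 1}^{r}$. I expect the main obstacle to be the final optimization: the factorization above splits $\tau$ into two pieces that happen to be jointly maximized at the same extreme, but both monotonicities must be verified carefully and their consistency exploited.
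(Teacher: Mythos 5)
Your proof is correct and follows the same overall strategy as the paper: fix a minimum edge cut $[A,B]$, complete both sides to cliques via Lemma~\ref{x3}, use Lemma~\ref{xx1} to shift the $r$ cross-edges into a matching (your verification of the hypotheses --- $N(w)\subseteq N[u]$, the two cross-neighbours $v_1,v_2\in N(u)\setminus N[w]$, and the path $v_1v_2$ lying entirely in the opposite clique --- is exactly the paper's application), and then optimize $\tau(M_{n_1,n_2}^{r})$ over the splits $n_1+n_2=n$ with $n_1,n_2\ge\delta+1$. The one place you genuinely diverge is this last optimization: the paper argues by a discrete exchange, showing $g(a+1,b-1,r)>g(a,b,r)$ via Lemma~\ref{x7} together with an explicit ratio computation, whereas you factor $\tau(M_{n_1,n_2}^{r})=r\,n_1^{n_1}n_2^{n_2}\,h(n_1n_2)$ with $h(p)=(p+n)^{r-1}/p^{r+1}$, note that $n_1^{n_1}n_2^{n_2}$ is maximized and $p=n_1n_2$ minimized at the most unequal admissible split, and that $h$ is strictly decreasing. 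Both arguments are sound; yours avoids the iteration and makes the strictness (hence the uniqueness of the extremal split) immediate from the strict convexity of $x\ln x+(n-x)\ln(n-x)$. The only cosmetic difference elsewhere is that the paper first completes $A\setminus\{v_0\}$ and $B$ while staying inside $\mathbb{E}_{n,\delta}^{r}$ and only then leaves the class, but since your chain of inequalities terminates at $M_{n-\delta-1,\delta+1}^{r}\in\mathbb{E}_{n,\delta}^{r}$, directly completing both sides is equally valid.
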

\begin{proof}
Let $G^{*}\in \mathbb{E}_{n, \delta}^{r}$ have the maximum number of spanning trees and let $U=[A,B]$ be an arbitrary edge cut of $G^{*}$ with $|U|=r$. By Lemma \ref{x4}, we have $\delta+1\leq |A|\leq n-\delta-1$ and $\delta+1\leq |B|\leq n-\delta-1$. Let $v_{0}$ be a vertex of degree $\delta$ in $G^{*}$. Without loss of generality, we suppose that $v_{0}\in A$. For $G^{*}$, let $A_{1}=N_{A}(v_{0})$ and $A_{2}=A\setminus (\{v_{0}\}\cup A_{1})$. Similarly, let  $B_{1}=N_{B}(v_{0})$ and  $B_{2}=B\setminus B_{1}$.  By Lemma \ref{x3}, $G^{*}[A\setminus\{v_0\}]$ and $G^{*}[B]$ are both cliques. Next, we prove that $G^{*}\cong M_{n-\delta-1,\delta+1}^{r}$.

Let $G_{1}$ be the graph obtained from $G^{*}$ by  adding all the possible edges between $v_{0}$ and $A_{2}$. By Lemma \ref{x3}, we have $\tau(G^*)\leq \tau(G_1)$. Suppose that $d_{B}(u)\geq 2$ for some $u\in A$ in  $G_{1}$ and  let $u_{1}, u_{2}\in N_{B}(u)$ in $G_1$ be two distinct vertices. Then $u_{1}u_{2}\in E(G^{*})$ since $G^{*}[B]$ is a clique.  Since $|[A,B]|=r<\delta$ and $|A\setminus\{u\}|\geq\delta$, we conclude that there exists a vertex $w\in A\setminus\{u\}$ that satisfies $d_{B}(w)=0$. We delete the edge $uu_{2}$ and add the edge $wu_{2}$ in $G_1$. Since $|A|\geq \delta +1>r$, we can keep doing this transformation until we reach $d_B(u) \le 1$ for every $u \in A$ in $G_1$, and we can do a similar transformation on the vertices in $B$. Let $G_{2}$ be the resultant graph constructed from $G_{1}$ by the previous transformations. By Lemma \ref{xx1}, we have $\tau(G_1)\leq \tau(G_2)$, with equality holding if and only if $d_{B}(u)\leq1$ for any $u\in A$ and $d_{A}(u)\leq1$  for any $u\in B$ in  $G_{1}$. By combining this with $\tau(G^{*})\leq\tau(G_{1})$, we have
\begin{equation}\label{g2_eq}
\tau(G^{*}) \leq \tau (G_{1}) \leq \tau (G_{2}).
\end{equation}

Let $|A|=a$ and $|B|=b$ be such that $a \ge b \ge \delta + 1$ and $a + b = n$. We observe that $G_2 \cong M_{a, b}^{r}$.  By Lemma \ref{x112}, we have $\tau(G_{2})=ra^{a-r-1}b^{b-r-1}(ab+n)^{r-1}$.

If $b = \delta+1$, we get $G_{2}\cong M_{n-\delta-1,\delta+1}^{r}\in \mathbb{E}_{n,\delta}^{r}$ and
\begin{displaymath}
\tau(M_{n-\delta-1,\delta+1}^{r})=r(\delta+1)^{\delta-r}(n-\delta-1)^{n-\delta-r-2}[(\delta+1)(n-\delta-1)+n]^{r-1}.
\end{displaymath}
Now we suppose that $a\geq b\geq \delta+2$. We define
$g(a,b,r)=ra^{a-r-1}b^{b-r-1}(ab+n)^{r-1}$,
where $a+b=n$ and $a\geq b\geq \delta+2$. We replace the pair $(a,b)$ by $(a+1,b-1)$, satisfying $(a+1)+(b-1)=n$ and claim that $g(a+1,b-1,r) > g(a,b,r)$. By Lemma \ref{x7}, we have
$f(b-2)< f(a-1)$ for $s=1$, that is,
$\frac{b^{b-2}} {(b-1)^{b-3}} < \frac{(a+1)^{a-1}}{a^{a-2}}.$
Then
\begin{displaymath}
\begin{split}
\frac{g(a+1,b-1,r)}{g(a,b,r)}
&=\frac{(a+1)^{a-r}(b-1)^{b-r-2}[(a+1)(b-1)+n]^{r-1}}{a^{a-r-1}b^{b-r-1}(ab+n)^{r-1}}\\
&=\frac{(a+1)^{a-1}(b-1)^{b-3}}{a^{a-2}b^{b-2}}\left[\frac{ab[(a+1)(b-1)+n]}{(a+1)(b-1)(ab+n)}\right]^{r-1}\\
&\geq \frac{(a+1)^{a-1}(b-1)^{b-3}}{a^{a-2}b^{b-2}}\\
&>1.
\end{split}
\end{displaymath}
Thus, $g(a+1,b-1,r)>g(a,b,r)$. This implies that
\begin{equation}\label{g2_xsh1}
\tau(G_2)=g(a,b,r)\leq g(a+1,b-1,r)\leq\cdots\leq g(n-\delta-1,\delta+1,r)=\tau(M_{n-\delta-1,\delta+1}^{r}),
\end{equation}
with equality holding if and only if $b=\delta+1$, that is, $G_2\cong M_{n-\delta-1,\delta+1}^{r}$. Recall that $M_{n-\delta-1,\delta+1}^{r}\in \mathbb{E}_{n,\delta}^{r}$. From the maximality of $G^{*}$  and in conjunction with   \eqref{g2_eq} and \eqref{g2_xsh1}, we have $G^*\cong M_{n-\delta-1,\delta+1}^{r}$.
\end{proof}

\section{Bipartite graphs with given connectivity}

In the present section we find all the graphs that attain the maximum number of spanning trees on $\mathbb{BV}_n^r$ and $\mathbb{BE}_n^r$, respectively. For any $r \in \mathbb{N}$, any bipartite graph with vertex connectivity $r$ cannot have below $2r$ vertices, while $K_{r, r}$ (resp.\ $K_{r, r + 1}$) is the unique bipartite graph with vertex connectivity $r$ and order $2r$ (resp. $2r + 1$). The following theorem fully solves the spanning tree maximization problem on $\mathbb{BV}_n^r$.

\begin{theorem}\label{bipv_th}
    Suppose that $G \in \mathbb{BV}_n^r$ with  $r \in \mathbb{N}$ and $n \ge 2r$. Then
    \[
        \tau(G) \le r \cdot \lfloor \tfrac{n + 1}{2} \rfloor^{r - 1} \cdot \lfloor \tfrac{n - 1}{2} \rfloor^{\lceil \frac{n - 1}{2} \rceil - r} \cdot \lceil \tfrac{n - 1}{2} \rceil^{\lfloor \frac{n - 3}{2} \rfloor},
    \]
    with the equality holding if and only if:
    \begin{enumerate}[label=\textbf{(\alph*)}]
        \item $G$ is a graph obtained by adding a new vertex and attaching it to $r$ vertices from the bipartition set of size $\lceil \frac{n - 1}{2} \rceil$ in $K_{\lfloor \frac{n - 1}{2} \rfloor, \lceil \frac{n - 1}{2} \rceil}$;
        \item $G$ can be another graph obtained by adding a new vertex and attaching it to a vertex from the bipartition set of size $\frac{n}{2} - 1$ in $K_{\frac{n}{2} - 1, \frac{n}{2}}$ for $r = 1$ and even $n \ge 4$.
    \end{enumerate}
\end{theorem}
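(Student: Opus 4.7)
The plan is to follow the blueprint of Theorem~\ref{l1}, adapted to the bipartite setting. Let $G^* \in \mathbb{BV}_n^r$ attain the maximum of $\tau$, with bipartition $(X, Y)$, a vertex cut $S$ of size $r$, and $G^* - S = C_1 \cup \cdots \cup C_t$ for some $t \ge 2$. Write $S_X = S \cap X$, $S_Y = S \cap Y$, and $(X_i, Y_i) = (X, Y) \cap C_i$. First I would reduce to $t = 2$: if $t \ge 3$, adding all missing bipartite edges between $C_j$'s with $j \ge 2$ keeps the graph bipartite, retains $S$ as a vertex cut of size $r$ (so $\kappa$ is preserved), and by Lemma~\ref{x3} strictly increases $\tau$, contradicting maximality.

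With $t = 2$, I would next saturate $G^*$ by Lemma~\ref{x3}: add all missing bipartite edges within each $C_i$, between $S$ and each $C_i$, and between $S_X$ and $S_Y$. Since no edges are added between $C_1$ and $C_2$, the set $S$ remains a vertex cut of size $r$, so $\kappa(G^*) = r$ is preserved. After saturation, $G^*$ is determined entirely by the six sizes $(|X_1|, |Y_1|, |X_2|, |Y_2|, |S_X|, |S_Y|)$, subject to $|S_X| + |S_Y| = r$ and $|X_1| + |Y_1| + |X_2| + |Y_2| = n - r$. Moreover, the $t = 2$ assumption combined with bipartiteness forces each $C_i$ to be either a non-trivial complete bipartite graph (with both $|X_i|, |Y_i| \ge 1$) or a singleton on one side.

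Next I would compute $\tau(G^*)$ using Theorem~\ref{x111}, viewing $G^*$ as a generalized join of empty graphs over a small base graph (on up to six vertices, one per nonempty part among $X_1, Y_1, X_2, Y_2, S_X, S_Y$) whose edges encode the allowed bipartite adjacencies. Since each $H_i$ is empty and the base graph is very sparse, Theorem~\ref{x111} yields a closed-form expression for $\tau(G^*)$ as a product of powers of the size parameters and their partial sums. The final step is the multi-variable optimization of this expression subject to the constraints. The plan is to apply Lemma~\ref{aux_lemma_2} (a ``push-to-boundary'' inequality) iteratively to force $|C_2| = 1$ and $S$ to lie entirely on one side of the bipartition, and then to apply Lemma~\ref{aux_lemma_1} (a ``push-to-centre'' inequality) to the remaining split $(\alpha, \beta)$ of the underlying $K_{\alpha, \beta}$ piece (where $\alpha + \beta = n - 1$), forcing $\{\alpha, \beta\} = \{\lfloor (n-1)/2 \rfloor, \lceil (n-1)/2 \rceil\}$ and yielding case~(a).

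The delicate part is the equality analysis. For $r \ge 2$, the auxiliary lemmas yield strict inequalities and case~(a) is the unique extremal graph. For $r = 1$ and $n$ even, however, the two non-isomorphic graphs obtained by attaching the pendant to either part of $K_{n/2 - 1, n/2}$ tie in $\tau$, since deleting the pendant edge leaves $K_{n/2 - 1, n/2}$ in both cases; this gives the additional extremal family of case~(b) and will emerge as a non-strict equality in the final push-to-centre step exactly when $\alpha + 1 = \beta$. The main obstacle is the bookkeeping required to rule out the ``mixed'' case $|S_X|, |S_Y| \ge 1$ (likely via a direct ratio computation showing that splitting $S$ across the bipartition strictly decreases $\tau$), to verify that every intermediate transformation respects the degree condition $\delta \ge r$ and the $t = 2$ assumption, and to trace carefully when each auxiliary inequality is tight so that precisely the two claimed extremal families are recovered.
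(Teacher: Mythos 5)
Your setup coincides with the paper's: reduce the maximizer to a ``saturated'' graph determined by the six sizes $(|X_1|,|Y_1|,|S_X|,|S_Y|,|X_2|,|Y_2|)$ (the paper's family $B(a_1,\dots,a_6)$ with $a_2+a_5=r$), and compute $\tau$ of such a graph via Theorem~\ref{x111}. That part is sound. The gap is in the optimization step, which is where essentially all of the work lies. Your plan to ``apply Lemma~\ref{aux_lemma_2} iteratively to force $|C_2|=1$ and $S$ to lie on one side'' treats the problem as a sequence of $\tau$-increasing local moves inside $\mathbb{BV}_n^r$, but no such sequence exists: when both components are non-trivial complete bipartite graphs, keeping $\kappa=r$ forces $a_1,a_3\ge a_5+1$ and $a_4,a_6\ge a_2+1$ (the paper derives these via a separate edge-addition argument: if $a_1+a_2=r$ then $I_1\cup I_2$ is another minimum cut and an edge can be added, contradicting Lemma~\ref{x3}), and this parameter region is disconnected from the pendant-vertex configurations $B(1,0,a_3,0,r,a_6)$ --- you cannot shrink a component to a single vertex through intermediate graphs that remain in $\mathbb{BV}_n^r$. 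Moreover, the closed form $\tau=\tau_1\tau_2$ from \eqref{tau_1_form}--\eqref{uaux_8} contains cross factors such as $(a_1+a_2+a_3)^{a_5-1}$, $(a_4+a_5+a_6)^{a_2-1}$ and the polynomial $\tau_2$, so it is not of the two-variable shape that Lemmas~\ref{aux_lemma_1} and \ref{aux_lemma_2} address; in particular the ``mixed $S$'' case $a_2,a_5\ge 1$ is not dispatched by a ratio computation but by a convexity argument in the split of $S$ that only bounds the factor $\tau_1$, and whose endpoint configurations need not be graphs in the class at all.

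What the paper actually does to rule out the two-nontrivial-components case (its Lemma~\ref{blemma}) is a global comparison: bound $\tau_1$ using convexity plus Lemmas~\ref{aux_lemma_1}--\ref{aux_lemma_2}, bound $\tau_2$ by $\tfrac{(n+r)^4}{256}$ via AM--GM, and then prove the resulting product is strictly below $f(\tfrac n2)$ --- an inequality \eqref{uaux_6} whose verification occupies several pages, requires the auxiliary functions $h_r$ and $z$, separate software checks for $r\in\{2,3,4,5\}$, and a limit argument at infinity. None of this is anticipated in your proposal, and without it the heart of the theorem (that the extremal graph has a single pendant-type component rather than two balanced ones) is unproven. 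The remaining one-parameter optimization over $B(1,0,a_3,0,r,a_6)$, including the $r=1$, even-$n$ tie you correctly flag, is comparatively routine and matches the paper's Lemma~\ref{blemma2}.
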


The remainder of the section will mostly focus on proving Theorem \ref{bipv_th}.  In fact, these extremal graphs belong to a larger family that we will rely on to carry out the proof. We thus introduce the auxiliary B-graphs as follows.

\begin{definition}
    For  $a_1, a_2, \ldots, a_6 \in \mathbb{N}_0$, the B-graph $B(a_1, a_2, a_3, a_4, a_5, a_6)$ is the bipartite graph of order $\sum_{i = 1}^6 a_i$ obtained from $6$ mutually disjoint independent sets $I_1, I_2, \ldots, I_6$ of vertices  with $|I_i| = a_i$ for $i\in[6]$ by connecting any two vertices $u$ and $v$  with  $u \in I_1$ and $v \in I_4 \cup I_5$; or $u \in I_2$ and $v \in I_4 \cup I_5 \cup I_6$; or $u \in I_3$ and $v \in I_5 \cup I_6$.
\end{definition}

\begin{figure}[h]
    \centering
    \subfloat[]{
        \centering
        \begin{tikzpicture}
            \node[state, minimum size=0.75cm, thick] (1) at (0, 0) {$a_1$};
            \node[state, minimum size=0.75cm, thick] (2) at (2, 0) {$a_2$};
            \node[state, minimum size=0.75cm, thick] (3) at (4, 0) {$a_3$};
            \node[state, minimum size=0.75cm, thick] (4) at (0, -2) {$a_4$};
            \node[state, minimum size=0.75cm, thick] (5) at (2, -2) {$a_5$};
            \node[state, minimum size=0.75cm, thick] (6) at (4, -2) {$a_6$};

            \draw[thick] (1) to (4);
            \draw[thick] (1) to (5);
            \draw[thick] (2) to (4);
            \draw[thick] (2) to (5);
            \draw[thick] (2) to (6);
            \draw[thick] (3) to (5);
            \draw[thick] (3) to (6);
        \end{tikzpicture}
    }
    \hspace{1cm}
    \subfloat[]{
        \centering
        \begin{tikzpicture}
            \node[state, minimum size=0.75cm, thick] (1) at (0, 0) {$a_1$};
            \node[state, minimum size=0.75cm, thick] (2) at (2, -2) {$a_2$};
            \node[state, minimum size=0.75cm, thick] (3) at (4, 0) {$a_3$};
            \node[state, minimum size=0.75cm, thick] (4) at (0, -2) {$a_4$};
            \node[state, minimum size=0.75cm, thick] (5) at (2, 0) {$a_5$};
            \node[state, minimum size=0.75cm, thick] (6) at (4, -2) {$a_6$};

            \draw[thick] (1) to (4);
            \draw[thick] (1) to (5);
            \draw[thick] (2) to (4);
            \draw[thick] (2) to (5);
            \draw[thick] (2) to (6);
            \draw[thick] (3) to (5);
            \draw[thick] (3) to (6);
        \end{tikzpicture}
    }
    \caption{Two visual representations of the graph $B(a_1, a_2, \ldots, a_6)$ where each vertex corresponds to an independent set whose size matches the assigned label.}
    \label{b_fig}
\end{figure}
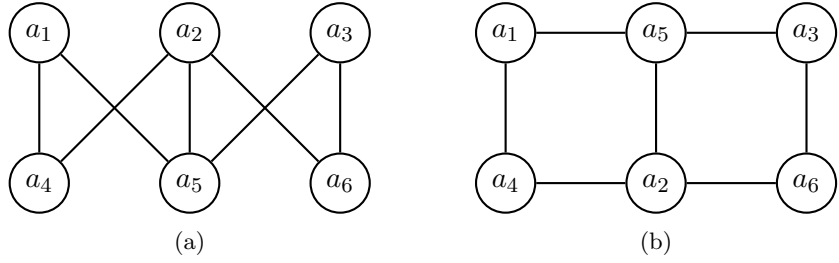

The B-graphs can be visually represented as shown in Figure \ref{b_fig}. Observe that the graph from item \textbf{(a)} of Theorem \ref{bipv_th} is isomorphic to $B(1, 0, \lfloor\tfrac{n - 1}{2}\rfloor, 0, r, \lceil \tfrac{n - 1}{2}\rceil - r)$, while the graph from item \textbf{(b)} is isomorphic to $B(1, 0, \frac{n}{2}, 0, 1, \frac{n}{2} - 2)$. We proceed with the next two lemmas.

\begin{lemma}\label{blemma2}
    Let $r \in \mathbb{N}$, $n \ge 2r + 2$ and  $a_3, a_6 \in \mathbb{N}_0$ with $a_3 + a_6 = n - r - 1$. Then
    \[
        \tau(B(1, 0, a_3, 0, r, a_6)) \le r \cdot \lfloor \tfrac{n + 1}{2} \rfloor^{r - 1} \cdot \lfloor \tfrac{n - 1}{2} \rfloor^{\lceil \frac{n - 1}{2} \rceil - r} \cdot \lceil \tfrac{n - 1}{2} \rceil^{\lfloor \frac{n - 3}{2} \rfloor},
    \]
    with the equality holding if and only if $(a_3, a_6) = (\lfloor \frac{n - 1}{2} \rfloor, \lceil \frac{n - 1}{2} \rceil - r)$; or $(a_3, a_6) = (\frac{n}{2}, \frac{n}{2} - 2)$  for $r = 1$ and even $n\ge 4$.
\end{lemma}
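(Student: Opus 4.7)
The plan is to compute $\tau(B(1, 0, a_3, 0, r, a_6))$ in closed form via Theorem \ref{x111}, reduce the maximization to a one-variable integer problem, and then exploit strict log-concavity of the resulting function to locate the maximizer.

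For the closed-form computation, I view $B(1, 0, a_3, 0, r, a_6)$ as the generalized join $G[\overline{K_1}, \overline{K_{a_3}}, \overline{K_r}, \overline{K_{a_6}}]$ where $G$ is the path $v_1 v_5 v_3 v_6$ (the nodes $v_2, v_4$ are discarded since $a_2 = a_4 = 0$). Since each $\overline{K_{a_i}}$ has all-zero Laplacian spectrum and $G$ itself contributes a unique spanning tree, Theorem \ref{x111} collapses to
\[
    \tau(B(1, 0, a_3, 0, r, a_6)) = r \cdot (1 + a_3)^{r - 1} \cdot a_3^{a_6} \cdot (r + a_6)^{a_3 - 1},
\]
using the convention $a_3^{0} = 1$; the degenerate case $a_3 = 0,\ a_6 \ge 1$ yields a disconnected graph with $\tau = 0$ and is absorbed by this convention. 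Setting $p = a_3$ and $q = n - r - 1 - p$, it remains to maximize $F(p) = r(p+1)^{r-1} p^q (r+q)^{p-1}$ over integers $p \in \{1, \ldots, n-r-1\}$.

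A direct term-by-term differentiation of $\ln F(p)$ yields
\[
    (\ln F)''(p) = -\frac{r-1}{(p+1)^2} - \frac{1}{p} - \frac{n-r-1}{p^2} - \frac{1}{n-p-1} - \frac{n-2}{(n-p-1)^2},
\]
which is strictly negative throughout $(1, n - r - 1)$ since $n - r - 1 \ge r + 1$ and $n - 2 \ge 2r$ by the hypothesis $n \ge 2r + 2$. Thus $F$ is strictly log-concave, hence strictly unimodal on integers. Furthermore $(\ln F)'((n-1)/2) = -4(r-1)/(n^2-1)$ vanishes for $r = 1$ and is strictly negative for $r \ge 2$; a parallel evaluation at $(n-3)/2$ establishes $(\ln F)'((n-3)/2) > 0$ whenever $r \ge 2$ and $n \ge 2r + 2$, placing the continuous maximizer in the open interval $((n-3)/2, (n-1)/2)$.

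For $r = 1$, the continuous maximum at $p = (n-1)/2$ is a unique integer optimum when $n$ is odd (alternatively, Lemma \ref{aux_lemma_1} with $\alpha = p,\ \beta = n-p-1,\ \gamma = 1$ gives this directly); for even $n$, the formula's symmetry $F(p, q) = F(q+1, p-1)$ gives $F(n/2-1) = F(n/2) = (n/2-1)^{n/2-1}(n/2)^{n/2-2}$, recovering both items \textbf{(a)} and \textbf{(b)}. For $r \ge 2$, strict log-concavity yields a unique integer maximizer: when $n$ is even the open interval $((n-3)/2, (n-1)/2)$ contains only $n/2 - 1$, so the maximizer is $p^{*} = n/2 - 1 = \lfloor (n-1)/2 \rfloor$; when $n$ is odd the candidates are $(n-3)/2$ and $(n-1)/2$, and the ratio $F((n-1)/2)/F((n-3)/2)$ factors (with $k = (n-1)/2$, $j = k - r$) as $\bigl(\tfrac{k}{k-1}\bigr)^{2} \bigl(\tfrac{k^{2}}{k^{2}-1}\bigr)^{j-1}$, each factor strictly greater than $1$, so the maximum is at $(n-1)/2 = \lfloor (n-1)/2 \rfloor$. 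The main obstacle, expected to be largely computational rather than conceptual, is the verification of $(\ln F)'((n-3)/2) > 0$ uniformly over $r \ge 2$ and $n \ge 2r + 2$: this reduces to an elementary polynomial inequality that can be handled by splitting into sub-cases according to whether $r \le 6$ or $r \ge 7$, each settled via the lower bound $\ln\bigl(1 + \tfrac{4}{n-3}\bigr) \ge \tfrac{4}{n+1}$ on the logarithmic term.
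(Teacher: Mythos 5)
Your overall route is the same as the paper's: compute the closed form $r(a_3+1)^{r-1}a_3^{a_6}(a_6+r)^{a_3-1}$ via Theorem \ref{x111}, reduce to maximizing a one-variable function $F(p)$ over integers, and compare the two or three candidate values near $\frac{n}{2}$. Your second-derivative computation is correct and every term of $(\ln F)''$ is indeed nonpositive, so global strict log-concavity holds; this is actually cleaner than the paper's argument, which only establishes monotonicity on $[1,\frac{n}{2}-1]$ and on $[\frac{n-1}{2},n-r-1]$ via the sign of a quadratic $P(x)$. Your evaluations $(\ln F)'(\frac{n-1}{2})=-\frac{4(r-1)}{n^2-1}$ and $(\ln F)'(\frac{n-3}{2})>0$ both check out (the latter is not the obstacle you fear: in the paper's notation it amounts to $P(\frac{n-3}{2})=\frac{(n-3)(r-1)}{2}+(n-1)(n-r-1)>0$ plus a positive logarithmic term, with no case-splitting needed), and your odd-$n$ ratio $\bigl(\frac{k}{k-1}\bigr)^{2}\bigl(\frac{k^{2}}{k^{2}-1}\bigr)^{j-1}$ agrees with the paper's.

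There is, however, a genuine gap in the even-$n$, $r\ge 2$ case. From strict unimodality with continuous maximizer $x^{*}\in(\frac{n-3}{2},\frac{n-1}{2})$ you may only conclude that the integer maximizer lies in $\{\lfloor x^{*}\rfloor,\lceil x^{*}\rceil\}$, which is a subset of $\{\frac{n}{2}-2,\frac{n}{2}-1,\frac{n}{2}\}$ depending on which side of $\frac{n}{2}-1$ the point $x^{*}$ falls; the inference ``the interval contains only the integer $\frac{n}{2}-1$, so the maximizer is $\frac{n}{2}-1$'' is a non sequitur, since for a general unimodal function with peak in $(\frac{n}{2}-1,\frac{n}{2}-\frac12)$ the integer maximum can be attained at $\frac{n}{2}$. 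You still need the two discrete comparisons: $F(\frac{n}{2}-1)>F(\frac{n}{2}-2)$ follows from concavity together with $(\ln F)'(\frac{n}{2}-1)>0$ (which you have not evaluated, though it is positive), and $F(\frac{n}{2}-1)>F(\frac{n}{2})$ requires the explicit ratio $F(\frac{n}{2}-1)/F(\frac{n}{2})=\bigl(\frac{k^{2}}{k^{2}-1}\bigr)^{r-1}$ with $k=\frac{n}{2}$, which is exactly the computation the paper performs (and which simultaneously yields the two tied extremal graphs when $r=1$). With these two ratio checks added, your argument closes completely.
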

\begin{proof}
    Regardless of whether $a_6\neq0$, by applying Theorem \ref{x111} and performing a routine computation, we have
    \begin{equation}\label{aux_1}
        \tau(B(1, 0, a_3, 0, r, a_6)) = r a_3^{a_6} (a_3 + 1)^{r - 1} (a_6 + r)^{a_3 - 1}
    \end{equation}
    with $a_3 + a_6 = n - r - 1$ and $a_3 \ge 1$. In particular,
    \begin{equation}\label{aux_2}
        \tau(B(1, 0, \lfloor \tfrac{n - 1}{2} \rfloor, 0, r, \lceil \tfrac{n - 1}{2} \rceil - r)) = r \cdot \lfloor \tfrac{n + 1}{2} \rfloor^{r - 1} \cdot \lfloor \tfrac{n - 1}{2} \rfloor^{\lceil \frac{n - 1}{2} \rceil - r} \cdot \lceil \tfrac{n - 1}{2} \rceil^{\lfloor \frac{n - 3}{2} \rfloor}.
    \end{equation}

    Now, let
    $f(x) = r x^{n - r - 1 - x} (x + 1)^{r - 1} (n - 1 - x)^{x - 1}$ be a function on the interval $[1, n - r - 1]$.
    A routine computation gives
   \begin{displaymath}
   \begin{split}
        (\ln f)'(x) &= \frac{n - r - 1 - x}{x} + \frac{r - 1}{x + 1} - \frac{x - 1}{n - 1 - x} + \ln \frac{n - 1 - x}{x}\\
        &= \frac{-(2n - 4)x^2 + (n^2 - 5n + 5 + r)x + (n - 1)(n - r - 1)}{x(x + 1)(n - 1 - x)} + \ln \frac{n - 1 - x}{x}
        \end{split}
    \end{displaymath}
    for any $x \in (1, n - r - 1)$. Note that $x(x + 1)(n - 1 - x) > 0$ for $x \in (1, n - r - 1)$ and let
    \[
       P(x)= -(2n - 4)x^2 + (n^2 - 5n + 5 + r)x + (n - 1)(n - r - 1) .
    \]
    Since $-(2n - 4) < 0$ and $(n - 1)(n - r - 1) > 0$, Vieta's formulas imply that $P(x)$ has one positive and one negative root. Note that $P(\tfrac{n}{2} - 1) = \frac{n(n - r - 1)}{2} > 0$ and $P(\tfrac{n - 1}{2}) = - \frac{(n - 1)(r - 1)}{2} \le 0$.
    Then we have $(\ln f)'(x)>0$ for $x \in (1, \tfrac{n}{2} - 1)$ and $(\ln f)'(x)<0$ for $x \in (\tfrac{n - 1}{2}, n - r - 1)$.  Therefore,  $f(x)$ is strictly increasing on $[1, \tfrac{n}{2} - 1]$ and strictly decreasing on $[\tfrac{n - 1}{2}, n - r - 1]$.

     Now, let $G^* = B(1, 0, a_3^*, 0, r, a_6^*)$ be a graph from the set $\{ B(1, 0, a_3, 0, r, a_6) : a_3, a_6 \in \mathbb{N}_0, a_3 + a_6 = n - r - 1 \}$
    with the maximum number of spanning trees. Clearly, we have $a_3^* \ge 1$, otherwise, $G^*$ will be disconnected. We now divide the argument into two cases depending on  the parity of $n$.

    \vskip 0.25cm\noindent
    \textbf{Case 1:} $n$ is odd.

    Let $n = 2k + 1$. The monotonicity of $f(x)$ on $[1, k - \tfrac{1}{2}]$ and $[k, n - r - 1]$ implies that $a_3^* = k - 1$ or $a_3^* = k$. From \eqref{aux_1}, we get
    $\tau(B(1, 0, k - 1, 0, r, k - r + 1)) = r(k - 1)^{k - r + 1} k^{r - 1} (k + 1)^{k - 2}$ and
        $\tau(B(1, 0, k, 0, r, k - r)) = r k^{2k - r - 1} (k + 1)^{r - 1}$. Then
    \begin{displaymath}
   \begin{split}
        \frac{\tau(B(1, 0, k, 0, r, k - r))}{\tau(B(1, 0, k - 1, 0, r, k - r + 1))} = \frac{k^{2k - 2r}}{(k - 1)^{k - r + 1} (k + 1)^{k - r - 1}}.
    \end{split}
    \end{displaymath}
    Since $(k - 1)^{k - r + 1} (k + 1)^{k - r - 1} = (k^2 - 1)^{k - r - 1} (k - 1)^2 < (k^2)^{k - r - 1} \, k^2 = k^{2k - 2r}$, we obtain $\tau(B(1, 0, k, 0, r, k - r)) > \tau(B(1, 0, k - 1, 0, r, k - r + 1))$. Therefore, $a_3^* = k = \lfloor \frac{n - 1}{2} \rfloor$ and the result follows from \eqref{aux_2}.

   \vskip 0.25cm\noindent
    \textbf{Case 2:} $n$ is even.

    Let $n = 2k$. From the monotonicity of $f(x)$ on $[1, k - 1]$ and $[k - \frac{1}{2}, n - r - 1]$, we conclude that $a_3^* = k - 1$ or $a_3^* = k$. Formula \eqref{aux_1} gives
        $\tau(B(1, 0, k - 1, 0, r, k - r)) = r(k-1)^{k-r} k^{k + r - 3}$ and
        $\tau(B(1, 0, k, 0, r, k - r - 1)) = r(k - 1)^{k - 1} k^{k - r - 1} (k + 1)^{r - 1}$.
    Hence
    \begin{displaymath}
   \begin{split}
        \frac{\tau(B(1, 0, k - 1, 0, r, k - r))}{\tau(B(1, 0, k, 0, r, k - r - 1))} = \frac{k^{2r - 2}}{(k - 1)^{r - 1} (k + 1)^{r - 1}} = \left( \frac{k^2}{k^2 - 1} \right)^{r - 1} .
    \end{split}
    \end{displaymath}
    If $r \ge 2$, then $\tau(B(1, 0, k - 1, 0, r, k - r)) > \tau(B(1, 0, k, 0, r, k - r - 1))$, which means that $a_3^* = k - 1 = \lfloor \frac{n - 1}{2} \rfloor$. The result then follows from \eqref{aux_2}.
    On the other hand, if $r = 1$, then there are two non-isomorphic extremal graphs $B(1, 0, k - 1, 0, r, k - r)$ and $B(1, 0, k, 0, r, k - r - 1)$, i.e., $B(1, 0, \lfloor \frac{n - 1}{2} \rfloor, 0, r, \lceil \frac{n - 1}{2} \rceil - r)$ and $B(1, 0, \tfrac{n}{2}, 0, 1, \tfrac{n}{2} - 2)$, in accordance with the lemma statement.
\end{proof}

\begin{lemma}\label{blemma}
    Let $r \in \mathbb{N}$. Suppose that $a_1, a_2, \ldots, a_6 \in \mathbb{N}_0$ with $a_2 + a_5 = r$, $a_1, a_3 \ge a_5 + 1$, $a_4, a_6 \ge a_2 + 1$ and $n = \sum_{i = 1}^6 a_i$. Then
    \[
        \tau(B(a_1, a_2, \ldots, a_6)) < \tau(B(1, 0, \lfloor\tfrac{n - 1}{2}\rfloor, 0, r, \lceil \tfrac{n - 1}{2}\rceil - r)) .
    \]
\end{lemma}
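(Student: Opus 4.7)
My plan is to apply Theorem \ref{x111} to derive an explicit closed form for $\tau(B(a_1, \ldots, a_6))$, and then use a sequence of inequality arguments to reduce to the setting of Lemma \ref{blemma2}. Viewing $B(a_1, \ldots, a_6)$ as the generalized join $G[I_1, \ldots, I_6]$ over the 6-vertex quotient graph $G$ from Figure \ref{b_fig} (whose parts $I_i$ are independent sets, so that all Laplacian eigenvalues vanish), Theorem \ref{x111} combined with a direct enumeration of the $15$ spanning trees of $G$ should yield an expression of the shape
\[
    \tau(B(a_1, \ldots, a_6)) = \prod_{i=1}^{6} d_i^{a_i-1} \cdot \bigl[ a_2 a_4 a_6 (a_1+a_2+a_3) + a_5 (a_1+a_2)(a_2+a_3)(a_4+a_5+a_6) \bigr],
\]
where $d_i$ denotes the sum of $a_j$ over the $G$-neighbors of $v_i$.

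Next, I would reduce to the case $a_2 = 0$, $a_5 = r$. Using the bipartite-flip isomorphism $B(a_1, a_2, a_3, a_4, a_5, a_6) \cong B(a_4, a_5, a_6, a_1, a_2, a_3)$, one may assume without loss of generality that $a_5 \ge a_2$. Then, invoking Lemma \ref{aux_lemma_1} with suitable parameters on the product factors (in particular, on pairs like $(a_4+a_5+a_6)^{a_2-1}(a_1+a_2+a_3)^{a_5-1}$ and $(a_1+a_2)^{a_4-1}(a_2+a_3)^{a_6-1}$), I would show that replacing $(a_2, a_5)$ by $(0, r)$ strictly increases the spanning tree count, using the hypotheses $a_1, a_3 \ge a_5 + 1$ and $a_4, a_6 \ge a_2 + 1$. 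With $a_2 = 0$ and $a_5 = r$, the formula collapses (after $d_2 = a_4+a_5+a_6$ cancels with its inverse) to
\[
    \tau(B(a_1, 0, a_3, a_4, r, a_6)) = r \cdot a_1^{a_4} a_3^{a_6} (a_4+r)^{a_1-1} (r+a_6)^{a_3-1} (a_1+a_3)^{r-1}.
\]

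Finally, I would apply Lemma \ref{aux_lemma_2} (with $\xi = 1$) to push $(a_1, a_4)$ down to $(1, 0)$ by transferring weight onto $(a_3, a_6)$ while keeping $n$ fixed, yielding an upper bound of the form $\tau(B(1, 0, a_3', 0, r, a_6'))$ with $a_3' + a_6' = n - r - 1$. Lemma \ref{blemma2} then finishes the comparison. The main obstacle is the reduction to $a_2 = 0$: this step simultaneously modifies both the product factor and the bracketed sum, and it is the only place where the specific structural hypotheses $a_1, a_3 \ge a_5 + 1$ and $a_4, a_6 \ge a_2 + 1$ get used. These hypotheses appear to be tailored precisely so that the resulting algebraic inequality decomposes into pieces that can be handled by Lemma \ref{aux_lemma_1}.
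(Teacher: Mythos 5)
Your closed form for $\tau(B(a_1,\ldots,a_6))$ is correct: the product $\prod_i d_i^{a_i-1}$ is exactly the factor $\tau_1$ in the paper's equation \eqref{tau_1_form}, and your bracketed sum expands to the paper's $\tau_2$ in \eqref{uaux_8}. However, the two reduction steps that carry the entire weight of your argument are only asserted, and at least one of them fails as stated. First, the reduction of $(a_2,a_5)$ to $(0,r)$: Lemma \ref{aux_lemma_1} pushes a product toward the \emph{balanced} configuration $\alpha=\beta=\tfrac{\alpha+\beta}{2}$, which is the opposite of what you need here (you want to push to an \emph{endpoint} of the segment of configurations). The paper handles this by a bespoke convexity argument on the one-parameter family $g(x)$, $x\in[-a_5,a_2]$, showing only that the product factor $\tau_1$ is maximized at one of the two endpoints; crucially, it does \emph{not} claim that the full count $\tau=\tau_1\tau_2$ increases under this move (the bracket $\tau_2$ changes in a complicated, non-monotone way), and instead bounds $\tau_2$ crudely by $(n+r)^4/256$ and absorbs the loss into a final explicit inequality. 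Your stronger claim that the move increases $\tau$ itself would need a separate argument that you have not supplied.

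Second, the transfer of weight from $(a_1,a_4)$ to $(a_3,a_6)$ is false in its natural step-by-step form. With $(a_2,a_5)=(0,r)$ one has
\[
\tau(B(a_1,0,a_3,a_4,r,a_6)) \;=\; r\, a_1^{a_4}\, a_3^{a_6}\,(a_4+r)^{a_1-1}(r+a_6)^{a_3-1}(a_1+a_3)^{r-1},
\]
and the elementary move $(a_4,a_6)\mapsto(a_4-1,a_6+1)$ multiplies this by $\tfrac{a_3}{a_1}\bigl(\tfrac{a_4+r-1}{a_4+r}\bigr)^{a_1-1}\bigl(\tfrac{r+a_6+1}{r+a_6}\bigr)^{a_3-1}$, which is strictly less than $1$ for, e.g., $r=2$, $a_1=10$, $a_3=3$, $a_4=a_6=1$. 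So the count is not monotone along this path, and Lemma \ref{aux_lemma_2} does not apply directly since the expression is not of the form $\alpha^{\alpha-\gamma}\beta^{\beta-\gamma}$. The paper avoids any such monotone-transfer argument for $r\ge 2$: after bounding $\tau_1$ via Lemmas \ref{aux_lemma_1} and \ref{aux_lemma_2} and $\tau_2$ by AM--GM, it reduces everything to the single explicit inequality \eqref{uaux_6} and verifies it by a two-stage calculus analysis of the auxiliary functions $h_r$ and $z$. As it stands, your proposal is a plan whose two pivotal claims are unproved, and one of them is not salvageable in the form proposed.
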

\begin{proof}
    Regardless of whether $a_2 a_5 \neq 0$, by applying Theorem \ref{x111} and performing a routine computation, we get
    $\tau(B(a_1, a_2, \ldots, a_6)) = \tau_1(a_1, a_2, \ldots, a_6) \, \tau_2(a_1, a_2, \ldots, a_6)$,
    where
    \begin{equation}\label{tau_1_form}
    \begin{split}
        \tau_1(a_1, a_2, \ldots, a_6) =& \,\, (a_1 + a_2 + a_3)^{a_5 - 1} (a_4 + a_5 + a_6)^{a_2 - 1}\\
        &\cdot (a_1 + a_2)^{a_4 - 1} (a_2 + a_3)^{a_6 - 1} (a_4 + a_5)^{a_1 - 1}  (a_5 + a_6)^{a_3 - 1}
    \end{split}
    \end{equation}
    and
    \begin{equation}\label{uaux_8}
    \begin{split}
        \tau_2(a_1, a_2, \ldots, a_6) = & \,\, a_2^2 a_5^2 + a_2^2 a_5 (a_4 + a_6) + a_2 a_5^2 (a_1 + a_3) +  a_2^2 a_4 a_6 + a_5^2 a_1 a_3\\
        & + a_2 a_5 (a_1 + a_3)(a_4 + a_6) + a_2 a_4 a_6 (a_1 + a_3) + a_5 a_1 a_3 (a_4 + a_6) .
    \end{split}
    \end{equation}
    Recall that, in  the proof of Lemma \ref{blemma2}, we showed that the function $f(x) = r x^{n - r - 1 - x} (x + 1)^{r - 1} (n - 1 - x)^{x - 1}$
    is increasing on $[1, \tfrac{n}{2} - 1]$ and decreasing on $[\frac{n - 1}{2}, n - r - 1]$, while
  \[
        \tau(B(1, 0, \lfloor\tfrac{n - 1}{2}\rfloor, 0, r, \lceil \tfrac{n - 1}{2}\rceil - r)) = \max_{x \in [n - r - 1]} f(x).
    \]
    Therefore,  regardless of the parity of $n$, it suffices to prove that
   \begin{equation}\label{target_eq}
        \tau(B(a_1, a_2, \ldots, a_6)) < f(\tfrac{n}{2})=\frac{r n^{\frac{n}{2} - r - 1} (n + 2)^{r - 1} (n - 2)^{\frac{n}{2} - 1}}{2^{n - 3}}.
   \end{equation}

    First, if $r = 1$, without loss of generality, we assume that $(a_2, a_5) = (0, 1)$. In this case, we get $\tau(B(a_1, 0, a_3, a_4, 1, a_6)) = a_1^{a_4} (a_4 + 1)^{a_1 - 1} a_3^{a_6} (a_6 + 1)^{a_3 - 1} $ from Equations \eqref{tau_1_form} and \eqref{uaux_8}. By Lemmas \ref{aux_lemma_1} and \ref{aux_lemma_2}, we have
    \begin{equation}\label{xsh-xsh2}
    \begin{split}
        a_1^{a_4} (a_4 + 1)^{a_1 - 1} a_3^{a_6} (a_6 + 1)^{a_3 - 1} &\le \left( \tfrac{a_1 + a_4 + 1}{2} \right)^{a_1 + a_4 - 1} \left( \tfrac{a_3 + a_6 + 1}{2} \right)^{a_3 + a_6 - 1}\\
        &= \frac{(a_1 + a_4 + 1)^{a_1 + a_4 - 1} (a_3 + a_6 + 1)^{a_3 + a_6 - 1}}{2^{n - 3}}\\
        &\leq\frac{4^2 (a_1 + a_3 + a_4 + a_6 - 2)^{a_1 + a_3 + a_4 + a_6 - 4}}{2^{n - 3}}\\
        &=\frac{4^{2}(n - 3)^{n - 5}}{2^{n - 3}},
        \end{split}
    \end{equation}
    where the second inequality holds as  $a_1 + a_4 + 1 \ge 4$ and $a_3 + a_6 + 1 \ge 4$. Since  $n > n - 2 > n - 3 \ge 4$ for $n \ge 7$, we have $4^2 (n - 3)^{n - 5} < n^{\frac{n}{2} - 2} (n-2)^{\frac{n}{2} - 1}$. Applying this fact to \eqref{xsh-xsh2}, the inequality \eqref{target_eq} trivially follows when $r=1$.

    In the remainder of the proof, we assume that $r \ge 2$. We obtain appropriate upper bounds on $\tau_1(a_1, a_2, \ldots, a_6)$ and $\tau_2(a_1, a_2, \ldots, a_6)$ in the following claims, respectively.

\vskip 0.25cm
    \begin{claim}\label{xsh-xsh1}
        $\tau_1(a_1, a_2, \ldots, a_6) \le \frac{(r + 1)^r (n - r - 2)^{n - r - 5}}{2^{n - 2r - 4} (r + 2)} $.
\end{claim}
{\noindent\bf Proof of Claim \ref{xsh-xsh1}.} Let $g \colon [-a_5, a_2] \to \mathbb{R}$ be the auxiliary function defined by
    \begin{displaymath}
    \begin{split}
        g(x) =\,\, & (a_5 - 1 + x) \ln (a_1 + a_2 + a_3 + x) + (a_2 - 1 - x) \ln (a_4 + a_5 + a_6 - x)\\
        &+ (a_4 - 1 - x) \ln (a_1 + a_2) + (a_6 - 1 - x) \ln (a_2 + a_3)\\
        &+ (a_1 - 1 + x) \ln (a_4 + a_5) + (a_3 - 1 + x) \ln (a_5 + a_6) .
        \end{split}
    \end{displaymath}
    Note that $g(x)$ is well-defined since $a_1 + a_2 + a_3 + x > 0$ and $a_4 + a_5 + a_6 - x  > 0$ for any $x \in [-a_5, a_2]$. A straightforward computation leads to
    \begin{displaymath}
     \begin{split}
        g''(x)& = \frac{2(a_1 + a_2 + a_3) - a_5 + x + 1}{(a_1 + a_2 + a_3 + x)^2} + \frac{2(a_4 + a_5 + a_6) - a_2 - x + 1}{(a_4 + a_5 + a_6 - x)^2}\\
        &\geq \frac{2(a_1 + a_2 + a_3 - a_5) + 1}{(a_1 + a_2 + a_3 + x)^2} + \frac{2(a_4 + a_5 + a_6 - a_2) + 1}{(a_4 + a_5 + a_6 - x)^2}>0,
     \end{split}
    \end{displaymath}
    hence $g(x)$ with $x \in [-a_5, a_2]$ is a convex function. Therefore, $g(x)$ attains the maximum value in at least one of the points $-a_5$ and $a_2$. Now, observe that
    \begin{displaymath}
    \begin{split}
        g(0) &= \ln \tau_1(a_1, a_2, a_3, a_4, a_5, a_6),\\
        g(-a_5) &= \ln \tau_1(a_1 - a_5, r, a_3 - a_5, a_4 + a_5, 0, a_6 + a_5),\\
        g(a_2) &= \ln \tau_1(a_1 + a_2, 0, a_3 + a_2, a_4 - a_2, r, a_6 - a_2) .
        \end{split}
    \end{displaymath}
    With this in mind, while proving this claim, we can assume without loss of generality that $(a_2, a_5) = (0, r)$, $a_1, a_3 \ge r + 1$ and $a_4, a_6 \ge 1$.

    From \eqref{tau_1_form}, a quick computation and organization gives
    \begin{displaymath}
        \tau_1(a_1, 0, a_3, a_4, r, a_6) = \frac{a_1^{a_4 + r - 1} (a_4 + r)^{a_1 - 1} a_3^{a_6 + r - 1} (a_6 + r)^{a_3 - 1}}{(a_1 + a_3)(a_4 + a_6 + r)} \left( \frac{a_1 + a_3}{a_1 a_3} \right)^r .
    \end{displaymath}
     By Lemmas \ref{aux_lemma_1} and  \ref{aux_lemma_2}, we get
    \begin{equation}\label{uaux_1}
    \begin{split}
    \tau_1(a_1, 0, a_3, a_4, r, a_6)&\leq\frac{\left( \tfrac{a_1 + a_4 + r}{2} \right)^{a_1 + a_4 + r - 2}\left( \tfrac{a_3 + a_6 + r}{2} \right)^{a_3 + a_6 + r - 2}}{(a_1 + a_3)(a_4 + a_6 + r)} \left( \frac{1}{a_1}+\frac{1}{a_3} \right)^r \\
    &\leq\frac{\left( \tfrac{a_1 + a_4 + r}{2} \right)^{a_1 + a_4 + r - 2} \left( \tfrac{a_3 + a_6 + r}{2} \right)^{a_3 + a_6 + r - 2} \left( \tfrac{2}{r + 1} \right)^r}{(r + 2)(n - r - 2)} \\
    &=\frac{(a_1 + a_4 + r)^{a_1 + a_4 + r - 2} (a_3 + a_6 + r)^{a_3 + a_6 + r - 2}}{2^{n - 4} (r + 1)^r (r + 2)(n - r - 2)}\\
    &\leq \frac{(2r + 2)^{2r} (n - r - 2)^{n - r - 4}}{2^{n - 4} (r + 1)^r (r + 2)(n - r - 2)} \\
    &= \frac{(r + 1)^r (n - r - 2)^{n - r - 5}}{2^{n - 2r - 4} (r + 2)},
     \end{split}
   \end{equation}
    where the second inequality holds as $a_1,a_{3} \ge r + 1$ and $(a_1 + a_3)(a_4 + a_6 + r) \ge (r + 2)(a_1 + a_3 + a_4 + a_6 - 2) = (r + 2)(n - r - 2)$ from $a_1 + a_3\geq r+2$ and $a_4 + a_6 + r \ge r + 2$,  and the last inequality holds as $a_1 + a_4 + r \ge 2r + 2$ and $a_3 + a_6 + r \ge 2r + 2$. Then the claim follows from \eqref{uaux_1}.

\vskip 0.25cm
 \begin{claim}\label{xsh-xsh10}
        $\tau_2(a_1, a_2, \ldots, a_6) \le \frac{(n + r)^4}{256} $.
\end{claim}
{\noindent\bf Proof of Claim \ref{xsh-xsh10}.}
    Indeed, from \eqref{uaux_8}, a routine computation gives
    \begin{displaymath}
    \begin{split}
        \tau_2(a_1, a_2, \ldots, a_6) =& (a_2 + a')^2 (a_5 + a'')^2 - (a' a'')^2\\
        &- \frac{a_2 (a_1 + a_2 + a_3)(a_4 - a_6)^2 + a_5 (a_4 + a_5 + a_6)(a_1 - a_3)^2}{4} ,
      \end{split}
    \end{displaymath}
    where $a' = \frac{a_1 + a_3}{2}$ and $a'' = \frac{a_4 + a_6}{2}$. Therefore,
    \begin{equation}\label{uaux_xsh1}
    \begin{split}
    \tau_2(a_1, a_2, \ldots, a_6) \le (a_2 + a')^2 (a_5 + a'')^2\leq\left( \frac{(a_2 + a') + (a_5 + a'')}{2} \right)^4
 = \frac{(n + r)^4}{256}.
    \end{split}
   \end{equation}
  Then the claim follows from \eqref{uaux_xsh1}.

  By Claims  \ref{xsh-xsh1} and \ref{xsh-xsh10}, we have
        $\tau(B(a_1, a_2, \ldots, a_6)) \le \frac{(r + 1)^r (n - r - 2)^{n - r - 5} (n + r)^4}{2^{n - 2r + 4} (r + 2)} $.
  With \eqref{target_eq} in mind, we conclude that to finalize the proof, it suffices to show that
  \begin{equation}\label{uaux_6}
    2^{2r - 7} (r + 1)^r (n - r - 2)^{n - r - 5} (n + r)^4 < r(r + 2) n^{\frac{n}{2} - r - 1} (n + 2)^{r - 1} (n - 2)^{\frac{n}{2} - 1}.
    \end{equation}
    Note that $n \ge 3r + 4$ from the assumptions and let
    \begin{displaymath}
    \begin{split}
        h_r(x) =\, & \ln r + \ln(r + 2) + (\tfrac{x}{2} - r - 1) \ln x + (r - 1) \ln (x + 2) + (\tfrac{x}{2} - 1) \ln (x - 2)\\
        &- (2r - 7) \ln 2 - r \ln (r + 1) - (x - r - 5)\ln(x - r - 2) - 4 \ln (x + r).
    \end{split}
    \end{displaymath}
    be a  function  on $[3r + 4, +\infty)$.
    Then \eqref{uaux_6} is equivalent to $h_r(x) >0$ for $x\geq3r + 4$.

   Now, we show that $h_r(x)$ is an increasing function. A straightforward computation gives
    \begin{displaymath}
        h_r'(x) = - \frac{r + 1}{x} + \frac{r - 1}{x + 2} + \frac{3}{x - r - 2} - \frac{4}{x + r} + \tfrac{1}{2} \ln x + \tfrac{1}{2} \ln(x - 2) - \ln(x - r - 2),
    \end{displaymath}
    \vskip -0.8cm
    \begin{displaymath}
    \begin{split}
        h_r''(x) &= \frac{r + 1}{x^2} - \frac{r - 1}{(x + 2)^2} - \frac{3}{(x - r - 2)^2} + \frac{4}{(x + r)^2} + \frac{1}{2x} + \frac{1}{2(x - 2)} - \frac{1}{x - r - 2} .
        \end{split}
    \end{displaymath}
     For each $r \in \{2, 3, 4, 5\}$, it can be trivially verified by using any mathematical software that $h_r''(x) < 0$ holds for any $x > 3r + 4$. Now, suppose that $r \ge 6$. Observe that
    $\frac{1}{x} < \frac{1}{x - 2}$ and $\frac{1}{x + r} < \frac{1}{x - r - 2}$,
    hence
    \begin{equation}\label{uaux_10}
    \begin{split}
        h_r''(x) &< \frac{r + 1}{(x - 2)^2} - \frac{r - 1}{(x + 2)^2} - \frac{3}{(x - r - 2)^2} + \frac{4}{(x - r - 2)^2} + \frac{1}{x - 2} - \frac{1}{x - r - 2}\\
        &= \frac{2(x^2 + 4rx + 4)}{(x - 2)^2 (x + 2)^2} + \frac{1}{(x - r - 2)^2} - \frac{r}{(x - 2)(x - r - 2)} .
    \end{split}
    \end{equation}
    Observe that $\frac{x^2 + 4rx + 4}{x^2 + 4x + 4} < \frac{7}{3}$.
 Indeed, this is equivalent to $x^2 - (3r - 7) x + 4 > 0$,
    which holds on $[3r + 4, +\infty)$ with $r\geq6$.
   By combining this with \eqref{uaux_10}, we obtain
    \begin{displaymath}
    \begin{split}
        h''_r(x) &< \frac{14}{3(x - 2)^2} + \frac{1}{(x - r - 2)^2} - \frac{r}{(x - 2)(x - r - 2)}\\
        &= - \frac{(3r - 17)x^2 - (3r^2 - 16r - 68)x - (8r^2 + 44r + 68)}{3(x - 2)^2 (x - r - 2)^2}.
        \end{split}
    \end{displaymath}
     Let $q(x) = (3r - 17)x^2 - (3r^2 - 16r - 68)x - (8r^2 + 44r + 68)$ be a function with $x\ge 3r+4$.
    Since $\frac{3r^2 - 16r - 68}{2(3r - 17)} < \frac{r}{2} + 1$ holds for any $r \ge 6$, we conclude that $q(x)$ is increasing on $[3r + 4, +\infty)$, which implies that $q(x) \ge q(3r + 4) = 18r^3 - 53r^2 - 136r - 68 > 0$
    for any $x > 3r + 4$. Therefore, $h_r''(x) < 0$ for each $x > 3r + 4$, which implies that $h_r'(x)>0$ since $\lim_{x \to +\infty} h_r'(x) = 0$. Thus, $h_r(x)$ is an increasing function on $[3r + 4, +\infty)$.

    To finalize the proof, we need to show that $h_r(3r + 4) > 0$ for any $r \ge 2$.
     Let $z \colon [2, +\infty) \to \mathbb{R}$ be the auxiliary function defined as
    \begin{displaymath}
    \begin{split}
        z(x) =\, & \ln x + \ln (x + 2) + (\tfrac{x}{2} + 1) \ln (3x + 4) + (x - 1) \ln (3x + 6) + (\tfrac{3x}{2} + 1) \ln (3x + 2)\\
        &-(2x - 7) \ln 2 - x \ln (x + 1) - (2x - 1) \ln (2x + 2) - 4 \ln (4x + 4) .
        \end{split}
    \end{displaymath}
    Then $h_r(3r + 4) > 0$ for any $r \ge 2$ gets down to showing $z(x) > 0$ for $x\in [2, +\infty)$. We have
    \begin{displaymath}
    \begin{split}
        z'(x) =\,\, & \frac{1}{x} + \frac{x}{x + 2} + \frac{3x + 6}{6x + 8}
        + \tfrac{1}{2} \ln(3x + 4) + \ln(x + 2) + \tfrac{3}{2} \ln(3x + 2)- 3\ln (x + 1)\\
        & -\frac{3}{2}+\ln 3- 4 \ln 2,\\
        z''(x) =& - \frac{123x^5 + 604x^4 + 1204x^3 + 1232x^2 + 640x + 128}{x^2 (x + 1) (x + 2)^2 (3x + 2)(3x + 4)^2} .
    \end{split}
    \end{displaymath}
    Observe that $z''(x)< 0$ holds for any $x > 2$, while $\lim_{x \to +\infty} z'(x) = \ln \tfrac{27}{16} > 0$. Therefore, $z(x)$ is an increasing function, hence the result follows by verifying that $z(2) > 0$.
\end{proof}

We are now in a position to complete the proof of Theorem \ref{bipv_th}.

\begin{proof}[Proof of Theorem \ref{bipv_th}]
    The statement trivially holds if $n \in \{2r, 2r + 1\}$, so we assume that $n \ge 2r + 2$. Let $G^* \in \mathbb{BV}_n^r$ be a graph that maximizes the number of spanning trees with a vertex cut $S\subset V(G)$  of size $r$ such that $G_1, G_2, \ldots, G_t$  with $t \ge 2$ are all the connected components of $G^*-S$. Observe that $G^*[S \cup V(G_1)]$ is a complete bipartite graph. Indeed, otherwise, we could add an edge to $G^*$ without changing the graph order or vertex connectivity, thus yielding a contradiction by Lemma~\ref{x3}. By analogy, we conclude that $G^*[S \cup V(G_2) \cup \cdots \cup V(G_t)]$ is a complete bipartite graph. Therefore, $G^* \cong B(a_1, a_2, \ldots, a_6)$ for some $a_1, a_2, \ldots, a_6$ such that $a_1 + a_4 \ge 1$, $a_2 + a_5 = r$ and $a_3 + a_6 \ge 1$. We proceed by splitting the argument into two cases depending on whether $a_1 a_3 a_4 a_6 = 0$.

\vskip 0.25cm\noindent
    \textbf{Case 1:} $a_1 a_3 a_4 a_6 \neq 0$.

    In this case, we have $a_1, a_3, a_4, a_6 \ge 1$. Since removing the independent sets $I_1$ and $I_2$ from $B(a_1, a_2, \ldots, a_6)$ disconnects the graph, we conclude that $a_1 + a_2 \ge r$. Now, suppose that $a_1 + a_2 = r$. If we take a vertex from $I_1$ and connect it to all the vertices from $I_6$, the graph $G^*$ transforms to $B(a_1 - 1, a_2 + 1, a_3, a_4, a_5, a_6)$. Since $(a_1 - 1) + (a_2 + 1) = r$, we have $B(a_1 - 1, a_2 + 1, a_3, a_4, a_5, a_6) \in \mathbb{BV}_n^r$, yielding a contradiction to Lemma \ref{x3}. Therefore, $a_1 + a_2 \ge r + 1$, which implies $a_1 \ge a_5 + 1$. We can also analogously conclude that $a_3 \ge a_5 + 1$, $a_4 \ge a_2 + 1$ and $a_6 \ge a_2 + 1$. By virtue of Lemma \ref{blemma}, we again reach a contradiction.

  \vskip 0.25cm \noindent
    \textbf{Case 2:} $a_1 a_3 a_4 a_6 = 0$.

    Here we can assume, without loss of generality, that $a_4 = 0$ and trivially observe that $G^* \cong B(a_1, a_2, a_3, 0, a_5, a_6) \cong B(a_1, 0, a_2 + a_3, 0, a_5, a_6)$. Besides, if $a_5 < r$, then $a_1 \ge 1$ implies that $\kappa(G^*) < r$ as a clear contradiction. This leads us to $a_5 = r$ and $a_2 = 0$, hence $G^* \cong (a_1, 0, a_3, 0, r, a_6)$.

    Now, suppose that $a_6 \ge 1$. In this case, if we have $a_1 \ge 2$, then we can take a vertex from $I_1$ and connect it to all the vertices from $I_6$, thus transforming the graph $G^*$ to $B(a_1 - 1, 0, a_3 + 1, 0, r, a_6)$ which  preserves the vertex connectivity. By virtue of Lemma~\ref{x3}, we obtain a contradiction, implying  $a_1 = 1$. On the other hand, if we suppose that $a_6 = 0$, then $G^* \cong (a_1, 0, a_3, 0, r, 0) \cong (1, 0, a_1 + a_3 - 1, 0, r, 0)$. Either way, we may assume that $G^* \cong (1, 0, a_3, 0, r, a_6)$ for some $a_3, a_6 \in \mathbb{N}_0$ such that $a_3 + a_6 = n - r - 1$. The result now follows from Lemma \ref{blemma2}.
\end{proof}

 For any $r \in \mathbb{N}$, it is easy to see that any bipartite graph with edge connectivity $r$ cannot be of order below $2r$. Therefore, the next corollary of Theorem \ref{bipv_th} shows that the spanning tree maximization problem on $\mathbb{BE}_n^r$ has the same solution as on $\mathbb{BV}_n^r$.

\begin{corollary}\label{bipe_cor}
Suppose that $G \in \mathbb{BE}_n^r$ with $r \in \mathbb{N}$ and $n \ge 2r$. Then
\[
    \tau(G) \le r \cdot \lfloor \tfrac{n + 1}{2} \rfloor^{r - 1} \cdot \lfloor \tfrac{n - 1}{2} \rfloor^{\lceil \frac{n - 1}{2} \rceil - r} \cdot \lceil \tfrac{n - 1}{2} \rceil^{\lfloor \frac{n - 3}{2} \rfloor},
\]
with the equality holding if and only if:
\begin{enumerate}[label=\textbf{(\alph*)}]
    \item $G$ is a graph obtained by adding a new vertex and attaching it to $r$ vertices from the bipartition set of size $\lceil \frac{n - 1}{2} \rceil$ in $K_{\lfloor \frac{n - 1}{2} \rfloor, \lceil \frac{n - 1}{2} \rceil}$;
        \item $G$ can be another graph obtained by adding a new vertex and attaching it to a vertex from the bipartition set of size $\frac{n}{2} - 1$ in $K_{\frac{n}{2} - 1, \frac{n}{2}}$ for $r = 1$ and even $n \ge 4$.
\end{enumerate}
\end{corollary}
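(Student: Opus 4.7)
The plan is to derive Corollary \ref{bipe_cor} directly from Theorem \ref{bipv_th}, exploiting the fundamental inequality $\kappa(G) \le \kappa'(G)$ valid for every graph $G$. Given $G \in \mathbb{BE}_n^r$, let $r' := \kappa(G)$, so that $r' \le r$ and $G$ lies in $\mathbb{BV}_n^{r'}$. Theorem \ref{bipv_th} (applied with parameter $r'$) then yields
\[
    \tau(G) \le F(r'), \qquad F(t) := t \cdot \lfloor \tfrac{n+1}{2} \rfloor^{t-1} \cdot \lfloor \tfrac{n-1}{2} \rfloor^{\lceil \frac{n-1}{2} \rceil - t} \cdot \lceil \tfrac{n-1}{2} \rceil^{\lfloor \frac{n-3}{2} \rfloor}.
\]
The corollary will follow once I show that $F$ is strictly increasing in $t$ on the relevant range, and that the extremal graphs from Theorem \ref{bipv_th} actually lie in $\mathbb{BE}_n^r$.

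For the monotonicity step, I would simply compute
\[
    \frac{F(t+1)}{F(t)} = \frac{t+1}{t} \cdot \frac{\lfloor (n+1)/2 \rfloor}{\lfloor (n-1)/2 \rfloor}
\]
and observe that $\lfloor (n+1)/2 \rfloor > \lfloor (n-1)/2 \rfloor$ regardless of the parity of $n$, so this ratio strictly exceeds $1$. Consequently $F(r') \le F(r)$ with equality only when $r' = r$. The two small-order cases $n \in \{2r, 2r+1\}$ are handled trivially, since the only bipartite graphs of these orders with edge connectivity $r$ are $K_{r,r}$ and $K_{r,r+1}$, which match the extremal configurations from item \textbf{(a)}.

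For the equality characterization, strictness of the monotonicity forces $\kappa(G) = r$ at equality, putting $G$ into $\mathbb{BV}_n^r$ and reducing the problem to Theorem \ref{bipv_th}; thus $G$ must be one of the graphs in items \textbf{(a)} or \textbf{(b)}. Conversely, it must be verified that each of these graphs is actually in $\mathbb{BE}_n^r$, not merely in $\mathbb{BV}_n^r$. In case \textbf{(a)} the newly added vertex has degree exactly $r$, while in case \textbf{(b)} (where $r=1$) it has degree $1$; in either case $\kappa'(G) \le r$, and combined with $\kappa'(G) \ge \kappa(G) = r$ this gives $\kappa'(G) = r$ as required.

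The main obstacle is essentially just the monotonicity check, which is entirely routine; beyond that, the derivation is a direct transplantation of Theorem \ref{bipv_th} through the sandwich $\kappa(G) \le \kappa'(G) = r$. No new combinatorial or analytic work is needed, and no new extremal family appears — the spanning-tree-maximization problem on $\mathbb{BE}_n^r$ simply inherits its answer from $\mathbb{BV}_n^r$.
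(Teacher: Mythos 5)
Your proposal is correct and follows essentially the same route as the paper: both reduce the problem to Theorem \ref{bipv_th} via the sandwich $\kappa(G) \le \kappa'(G) = r$ and then must rule out $\kappa(G) < r$ at equality. The only (minor) divergence is in that last step: you check monotonicity directly via $\frac{F(t+1)}{F(t)} = \frac{t+1}{t} \cdot \frac{\lfloor (n+1)/2 \rfloor}{\lfloor (n-1)/2 \rfloor} > 1$, whereas the paper notes that the extremal graph for connectivity $\ell < r$ is a proper spanning subgraph of the item \textbf{(a)} graph for connectivity $r$ and invokes Lemma \ref{x3}; both verifications are valid.
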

\begin{proof}
Observe that the graphs from items \textbf{(a)} and \textbf{(b)} of Theorem \ref{bipv_th} have the same vertex and edge connectivity, that is, they belong to $\mathbb{BE}_{n}^{r}$. Now, let $G^* \in \mathbb{BE}_n^r$ with the maximum number of spanning trees on $\mathbb{BE}_n^r$. Recall that $\kappa(G) \leq \kappa'(G) \leq \delta(G)$ holds for any graph $G$. Therefore, we have $G^*\in\mathbb{BV}_n^\ell$ for some $\ell \in [r]$. By Theorem~\ref{bipv_th}, we conclude that $\tau(G^*) \le \tau(\widetilde{G})$, where $\widetilde{G}$ arises from $K_{\lfloor \frac{n - 1}{2} \rfloor, \lceil \frac{n - 1}{2} \rceil}$ by adding a new vertex and attaching it to $\ell$ vertices from the bipartition set of size $\lceil \frac{n - 1}{2} \rceil$. If $\ell < r$, then we obtain a contradiction by Lemma \ref{x3} since $\widetilde{G}$ is a proper subgraph of the graph from item \textbf{(a)} of Theorem \ref{bipv_th}. Therefore, $\ell = r$, hence the result follows from Theorem~\ref{bipv_th}.
\end{proof}

\section*{Acknowledgements}
The work was supported by National Natural Science Foundation of China (Grant No.\ 12271251), Postgraduate Research \& Practice Innovation Program of Jiangsu Province, grant number KYCX25\_0625, the Ministry of Science, Technological Development and Innovation of the Republic of Serbia, grant number 451-03-137/2025-03/200102, and the Science Fund of the Republic of Serbia, grant \#6767, Lazy walk counts and spectral radius of threshold graphs --- LZWK.

\footnotesize

\end{document}